\numberwithin{equation}{section}
\theoremstyle{plain}
\newtheorem{thm}{Theorem}[section]
\newtheorem{prp}[thm]{Proposition}
\newtheorem{cor}[thm]{Corollary}
\newtheorem{lem}[thm]{Lemma}
\newtheorem*{euc*}{Euclidean division}
\newtheorem*{fek*}{Fekete's Lemma}
\newtheorem*{kin*}{Kingman's Subadditive Ergodic Theorem}
\newtheorem*{fur*}{Furstenberg-Kesten Theorem}
\theoremstyle{definition}
\newtheorem{defa}[thm]{Definition}
\newtheorem{rem}[thm]{Remark}
\newtheorem*{rem*}{Remark}
\newcommand{\dd}{\mathrm{d}}
\newcommand{\ii}{\mathrm{i}}
\renewcommand{\Im}{\operatorname{Im}}
\renewcommand{\Re}{\operatorname{Re}}
\newcommand{\N}{\mathbb{N}}
\newcommand{\Z}{\mathbb{Z}}
\newcommand{\R}{\mathbb{R}}
\newcommand{\C}{\mathbb{C}}
\newcommand{\T}{\mathbb{T}}
\newcommand{\cT}{\mathcal{T}}
\newcommand{\cN}{\mathcal{N}}
\newcommand{\cA}{\mathcal{A}}
\newcommand{\cW}{\mathcal{W}}
\newcommand{\cB}{\mathcal{B}}
\newcommand{\cM}{\mathcal{M}}
\newcommand{\cK}{\mathcal{K}}
\newcommand{\cZ}{\mathcal{Z}}
\newcommand{\eps}{\epsilon}
\DeclareMathOperator{\dist}{dist}
\DeclareMathOperator{\supp}{supp}
\DeclareMathOperator{\tr}{tr}
\DeclareMathOperator{\rank}{rank}
\DeclareSymbolFont{extraup}{U}{zavm}{m}{n}
\DeclareMathSymbol{\varheart}{\mathalpha}{extraup}{86}
\DeclareMathSymbol{\vardiamond}{\mathalpha}{extraup}{87}
\title{$L^p$ norms and support of eigenfunctions on graphs}
\author{Etienne Le Masson}
\address{Universit\'e de Cergy-Pontoise, AGM, 2 av. Adolphe Chauvin, 95302 Cergy-Pontoise Cedex, France.}
\email{etienne.le-masson@u-cergy.fr}
\author{Mostafa Sabri}
\address{Department of Mathematics, Faculty of Science, Cairo University, Cairo 12613, Egypt.}
\address{Universit\'e Paris Sud XI, UMR 8628 du CNRS, Laboratoire de Math\'ematique, B\^at. 307, 91405 Orsay Cedex, France.}
\email{mmsabri@sci.cu.edu.eg}
\subjclass[2010]{Primary 05C50. Secondary 81Q10}
\keywords{large graphs, delocalization, $p$-norm of eigenfunctions, support of eigenfunctions, $N$-lifts.}
\newlength{\temp@wc@width}
\newlength{\temp@wc@height}
\newcommand{\widecheck}[1]{%
  \setlength{\temp@wc@width}{\widthof{$#1$}}%
  \setlength{\temp@wc@height}{\heightof{$#1$}}%
  #1\hspace{-\temp@wc@width}%
  \raisebox{\temp@wc@height+2pt}[\heightof{$\widehat{#1}$}]%
     {\rotatebox[origin=c]{180}{\vbox to 0pt{\hbox{$\widehat{\hphantom{#1}}$}}}}%
}
\begin{document}

\begin{abstract}
This article is concerned with properties of delocalization for eigenfunctions of Schr\"odinger operators on large finite graphs. More specifically, we show that the eigenfunctions have a large support and we assess their $\ell^p$-norms. Our estimates hold for any fixed, possibly irregular graph, in prescribed energy regions, and also for certain sequences of graphs such as $N$-lifts.
\end{abstract}

\maketitle

\section{Introduction}\label{sec:mainres}

Recent years have seen much interest in understanding the spectra and eigenvectors of large finite graphs and random matrices. Concerning the spectrum, central questions are the convergence of the empirical laws (normalized count of eigenvalues in an interval) and the properties of the limiting distribution. If the graphs converge to some random infinite graph (in the sense of Benjamini-Schramm), an important question is also the nature of the spectrum (pure point, absolutely continuous, singularly continuous) of almost every limit graph. Concerning the eigenvectors, one asks whether they become \emph{localized} as the graph grows large (decay exponentially, have small support) or \emph{delocalized}. Eigenvector delocalization is measured by many criteria~: large support, uniform distribution over the graph, and also norm estimates. If the supremum norms of $\ell^2$-normalized eigenvectors decay fast as the graph gets large, this forces the entries of the vector to spread out. Bounds on the $\ell^p$ norms give further insight into the shape of the eigenvectors. 

To present our results, it is instructive to start with very simple graphs~: $N$-cycles. The set of vertices is $\{0,\dots,N-1\}$ and each point has two neighbors. The eigenvalues and eigenvectors of the adjacency matrix on such graphs are completely explicit~: we have
\[
\lambda_j = 2\cos\left(\frac{2j\pi}{N}\right) \quad \text{and} \quad \psi_{\lambda_j} = \frac{1}{\sqrt{N}}\left(1,\omega^j,\omega^{2j},\dots,\omega^{(N-1)j}\right)
\]
for $j=0,\dots,N-1$, where $\omega = e^{\frac{2\pi\ii}{N}}$ and the eigenvectors are $\ell^2$-normalized. This is the ideal delocalization one can hope for~: all eigenvectors are perfectly uniformly distributed on the graph, in the sense that $|\psi_{\lambda}(k)|^2=\frac{1}{N}$ for all $k$. Here, $\|\psi_{\lambda}\|_{\infty} = \frac{1}{|G|^{1/2}}$, $\|\psi_{\lambda}\|_p = \frac{1}{|G|^{\frac{1}{2}-\frac{1}{p}}}$ for any $p>2$ and each $\psi_{\lambda}$ has full support on the graph.

As a first step towards generalization, one can replace such $2$-regular graphs by general $(q+1)$-regular graphs. This was carried out in a series of papers. It is shown in \cite{ALM,BLL,A} that most eigenfunctions of such graphs are uniformly distributed in a certain sense, a property known as \emph{quantum ergodicity}. Lower bounds on the support were provided in \cite{BL}, where it is shown more generally that the eigenfunctions cannot concentrate on small sets. The recent paper \cite{BL17} provides norm estimates which read as $\|\psi_{\lambda}\|_p \lesssim \frac{1}{(\log_q |G|)^{1/2}}$ for all $p>2$, for generic regular graphs. All the preceding results apply to \emph{deterministic} graphs. In case of \emph{random} regular graphs of high degree, much better delocalization properties were obtained in \cite{BHY}. It is shown essentially that with high probability, all the ideal properties of $N$-cycles remain true, modulo logarithmic corrections.

In this paper, we investigate the eigenvectors of graphs which are more general in two respects~: we add potentials and consider irregular graphs. A quantum ergodicity result was recently established in this framework \cite{AS2,AS3,AS4}, where it is shown essentially that if a sequence of graphs converges in the sense of Benjamini-Schramm\footnote{Roughly speaking, $(G_N)$ converges to $G$ in the sense of Benjamini-Schramm if random $k$-balls in $G_N$ look similar to random $k$-balls in $G$, as $N$ gets large. See \cite{BS,AL} for details and adaptations to $(G_N,W_N)$.} to a random tree, and if the spectrum at the limit is purely absolutely continuous almost surely, then most eigenvectors become uniformly distributed in some sense. Our aim here is to provide norm estimates for the eigenfunctions and discuss their support. Our results hold in particular for \emph{all eigenfunctions} in the bulk spectrum, whereas quantum ergodicity is a different delocalization criterion which only assesses \emph{most eigenfunctions} in an interval.

Our theorems are formulated for fixed graphs $(G,W)$, where $W:V(G)\to \R$ is a potential and we consider the Schr\"odinger operator $H_G = \cA_G+W$, with $\cA_G$ the adjacency matrix. If one wants to consider the asymptotics of a sequence $(G_N,W_N)$, one must keep track of the corresponding constants in the inequalities, in a common energy region for all $N$. Our results do not cover all Benjamini-Schramm limits discussed in \cite{AS2}, see \S~\ref{sec:seq}. Still, we show that if $(G_N,W_N)$ is the set of $N$-lifts of an arbitrary finite graph $(G_1,W_1)$ with degree $\ge 2$, then our estimates allow to control the whole sequence. An $N$-lift $G_N$ is an $N$-cover over $G_1$ with $W_N(v) = W_1(\pi_N v)$, where $\pi_N :G_N\to G_1$ is the covering projection. The theory of random $N$-lift has been extensively studied in the last two decades, as a natural model of random irregular graphs \cite{AL02,ALM02,Fri03,BL06,Pu15,Bornew}. In particular, such graphs are typically connected (an assumption we make throughout). 

\subsection{Main results}
We shall use the notation $\sigma(A)$ for the spectrum of an operator $A$.

Our first result concerns Schr\"odinger operators on $N$-cycles $G=\{0,\dots,N-1\}$. Each digit $j$ is endowed with a potential $W_j$ and we consider $H_G = \cA_G + W$. Our estimates depend on the behavior of the ``lifted'' Schr\"odinger operator $H_{\Z}$ on $\Z$, which is the universal cover of $G$, with periodic potential $W_{j+kN}:=W_j$. The spectrum of $H_{\Z}$ is purely absolutely continuous and consists of at most $N$ bands (see Section~\ref{sec:cycles}). We say that $\lambda$ is in \emph{the bulk of the spectrum} if $\lambda$ is in the interior of such a band.

\begin{thm}\label{thm:cycles}
Let $\psi_{\lambda}$ be an eigenfunction of $H_G$ on the $N$-cycle, $\|\psi_{\lambda}\|_2=1$.

If $\lambda\notin \sigma(H_{\Z})$ and $\delta_{\lambda}=\dist(\lambda,\sigma(H_{\Z}))$, then
\[
\|\psi_{\lambda}\|_{\infty}^2 \le \frac{16\delta_{\lambda}^{-2}}{N} \,.
\]
If $\lambda$ is in the bulk of the spectrum, assume moreover that the potential is $m$-periodic. That is $N=N'm$ and $W_{j+km} = W_j$ on $G$. Then
\[
\|\psi_{\lambda}\|_{\infty}^2 \le \frac{C_{\lambda,m}}{N} \,.
\]
\end{thm}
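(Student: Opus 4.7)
My plan is to lift $\psi_\lambda$ on $G = \Z/N\Z$ to an $N$-periodic function $\tilde\psi$ on the universal cover $\Z$. By construction $\tilde\psi$ solves $H_\Z\tilde\psi = \lambda\tilde\psi$, is bounded, and $\sum_{k=0}^{N-1}|\tilde\psi(k)|^2 = 1$; the two regimes are then handled by analyzing the bounded solutions of this periodic problem.

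The first bound is essentially vacuous: if $\lambda \notin \sigma(H_\Z)$, the monodromy $M_N(\lambda)$ has eigenvalues of moduli $\ne 1$, so every nonzero solution of $H_\Z\phi = \lambda\phi$ is exponentially unbounded in one direction, contradicting the periodicity of $\tilde\psi$. Hence the hypothesis is never realized by an eigenvalue of $H_G$ and the inequality holds trivially. Alternatively one can derive it from a Combes--Thomas bound on $(H_\Z - \lambda)^{-1}$, whose operator norm is at most $\delta_\lambda^{-1}$, combined with the averaging formula relating the cyclic Green's function to that on the cover and reading off the residue at the pole $z = \lambda$.

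For the bulk case I invoke Floquet--Bloch theory for the $m$-periodic $H_\Z$. At each $\lambda$ in the interior of a band there is a quasi-momentum $\theta = \theta(\lambda) \in (0,\pi)$ and two linearly independent Bloch solutions $\phi_\pm(k) = e^{\pm i k\theta}u_\pm(k)$, with $u_\pm$ being $m$-periodic. Writing $\tilde\psi = A\phi_+ + B\phi_-$, the $N$-periodicity forces $e^{iN\theta} = 1$, and the pointwise bound $|\tilde\psi(k)| \le (|A| + |B|)\max(\|u_+\|_\infty,\|u_-\|_\infty)$ is immediate. Expanding the squared $\ell^2$-norm and exploiting $m$-periodicity yields
\[
1 = \frac{N}{m}\bigl(|A|^2\|u_+\|_{\ell^2([0,m-1])}^2 + |B|^2\|u_-\|_{\ell^2([0,m-1])}^2\bigr) + 2\Re\!\left(A\bar B \sum_{k=0}^{N-1} e^{2ik\theta}u_+(k)\overline{u_-(k)}\right).
\]
A geometric summation using $e^{iN\theta} = 1$ shows that the cross term vanishes unless $e^{2im\theta} = 1$; but this last condition says the Floquet multiplier $e^{im\theta}$ equals $\pm 1$, placing $\lambda$ at a band edge, which is excluded in the bulk. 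Hence $|A|^2 + |B|^2 \le m/(N\min_\pm\|u_\pm\|_{\ell^2([0,m-1])}^2)$, and combining with the $\infty$-bound yields $\|\psi_\lambda\|_\infty^2 \le C_{\lambda,m}/N$ with $C_{\lambda,m} = 2m\max_\pm\|u_\pm\|_\infty^2/\min_\pm\|u_\pm\|_{\ell^2([0,m-1])}^2$.

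The key step is this cancellation of the cross term, which hinges on the clean identification of resonant quasi-momenta ($e^{2im\theta} = 1$) with band edges of $\sigma(H_\Z)$. The secondary task is verifying that $C_{\lambda,m}$ depends reasonably on $\lambda$, which follows from the regularity of the Bloch data away from band edges---the constant naturally blowing up as $\lambda$ approaches such an edge.
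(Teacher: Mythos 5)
Your argument is correct, but it proceeds quite differently from the paper. For the first claim, your vacuity observation is right and worth making precise: the lift of $\psi_\lambda$ is an $N$-periodic solution of $H_\Z\phi=\lambda\phi$, so the period-$N$ monodromy matrix has eigenvalue $1$; since its determinant is $1$, its trace is $2$, hence the discriminant satisfies $|\Delta_N(\lambda)|\le 2$ and $\lambda\in\sigma(H_\Z)$. So for cycles the hypothesis $\lambda\notin\sigma(H_\Z)$ is never met and the implication holds vacuously. (Your alternative Combes--Thomas sketch is too vague to stand on its own, but it is not needed.) The paper instead gives a genuine quantitative proof via its representation formula: $\psi_\lambda(j)$ is written as an average over $r=1,\dots,N$ of Green-function expressions $G^\lambda(j,j\pm r)\psi_\lambda(j\pm r\mp 1)-\dots$, and one applies Cauchy--Schwarz together with $|G^\lambda(j,k)|\le\delta_\lambda^{-1}$; this is the argument that generalizes to graphs whose universal cover has nonempty untempered spectrum, which is the real point of stating the first bound. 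For the bulk, the paper again uses the averaged representation, now bounding $|G^\lambda(j,j+r)|^2=|G^\lambda(j,j)|^2\,|\Im\zeta_j^\lambda(j+1)|/|\Im\zeta_{j+r}^\lambda(j+r+1)|$ by a constant depending only on the $m$ distinct Green functions of the $m$-periodic cover. Your route is classical Floquet--Bloch: decompose the lift as $A\phi_++B\phi_-$, kill the cross term in $\sum_{k=0}^{N-1}|\tilde\psi(k)|^2$ by the geometric sum $\sum_{j=0}^{N'-1}(e^{2\ii m\theta})^j$, which vanishes because $e^{\ii N\theta}=1$ while $e^{2\ii m\theta}\ne 1$ in a band interior (the Floquet multiplier $e^{\ii m\theta}=\pm1$ exactly when $\Delta_m(\lambda)=\pm2$, i.e.\ at band edges or closed gaps, both excluded from the bulk). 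This is complete and gives an explicit, arguably more transparent constant $C_{\lambda,m}$; its drawback is that it relies on the two-dimensionality of the solution space and exact Bloch structure on $\Z$, so unlike the paper's Green-function argument it does not serve as a template for the tree-covered graphs treated in the rest of the paper.
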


This theorem shows that the ideal decay of the norms on $N$-cycles that we discussed before remains true if we add periodic potentials.

One can also ask about the support of such eigenfunctions. The answer is easy~: if $\psi_{\lambda}$ is an eigenfunction of $H_G$, then $\psi_{\lambda}(k+1) = (\lambda-W_k) \psi_{\lambda}(k) - \psi_{\lambda}(k-1)$. So if $\psi_{\lambda}$ vanishes on two consecutive digits, it vanishes identically. This shows the support is at least $\lceil \frac{N}{2} \rceil$. This lower bound is sharp in general.

The theorem remains true for weighted Schr\"odinger operators $(H_G \psi)(k)=a_k \psi(k+1) + a_{k-1}\psi(k-1) + W_k\psi(k)$, assuming $a_{j+m}=a_j$ in case of the bulk.

\bigskip

We now move to general graphs $G$, which we always assume to be connected. Our estimates are useful when the graph does not have too many short cycles. We thus define

\begin{itemize}
\item $\rho_G$, the minimal injectivity radius of $G$, i.e. the largest $\rho\in \N$ such that the ball $B_G(x,\rho)$ is a tree for any $x\in G$,
\item $\ell_G$, the largest $\ell\in \N$ such that $B_G(x,\ell)$ has at most one cycle for any $x\in G$.
\end{itemize}

Equivalently, $\rho_G$ is half the girth (length of the smallest cycle in $G$). By definition, $\ell_G \ge \rho_G$. In case of random $(q+1)$-regular graphs, we have $\ell_G \to +\infty$ almost surely as $|G|\to +\infty$. In fact, one has $\ell_G \ge \frac{1}{5} \log_q|G|$ almost surely, see \cite{MWW04}. This remains true for certain irregular graphs. More precisely, it follows from \cite[Lemma 24]{Bornew} that if we consider a random $N$-lift $G_N$ of a fixed finite graph $G_1$ of maximal degree $D$, then with probability converging to one as $N\to\infty$, we have $\ell_{G_N} \ge \frac{1}{5} \log_{D-1} N$. In contrast, the probability that $\rho_G \ge c \log_q|G|$ is very small, see \cite[Corollary 1]{MWW04}.

Henceforth we assume that $\ell_G$ is large.

\medskip

Our estimates show that eigenvector delocalization is intimately related to the behavior of the Green function on the universal cover of the graph. Let us introduce some notation. 

If $(\cT,\cW)$ is a tree with potential $\cW$ on the vertices, if $v,w\in \cT$, $v\sim w$ and $\gamma\in \C^+ = \{\Im z >0\}$, let
\[
\zeta_w^{\gamma}(v)=-(H_{\cT}^{(v|w)}-\gamma)^{-1}(v,v)
\]
be the Green function of $\cT^{(v|w)}\subset \cT$, the subtree containing $v$ if we remove the edge $(v,w)$ from $\cT$. Here $H_{\cT}^{(v|w)} = \cA_{\cT^{(v|w)}}+\cW_{\cT^{(v|w)}}$. If $(\cT,\cW)=(\widetilde{G},\widetilde{W})$ is the universal cover of $(G,W)$, we extend this definition to the finite graph by letting $\zeta_y^{\gamma}(x) := \zeta_{\tilde{y}}^{\gamma}(\tilde{x})$ for $(x,y)\in B(G)$, the set of oriented edges of $G$. Here, $\tilde{x}\sim\tilde{y}$ are lifts of $x,y$ on the universal cover.

We emphasize that if $(x,y)$ is a directed edge in $G$, $\zeta_y^{\gamma}(x)$ is thus a Green function of an infinite tree, not the Green function of the finite graph.

We show in Theorem~\ref{thm:specone} that if $G$ has minimal degree at least $2$, then the spectrum of its universal cover $(\widetilde{G},\widetilde{W})$ consists of bands of AC spectrum and possibly some eigenvalues.

\begin{defa}
Fix $(G,W)$. We say that an eigenvalue $\lambda$ of $H_G$ belongs to the \emph{bulk of the spectrum} if it lies in the interior of an AC band of $H_{\widetilde{G}}$.

If $\lambda$ is in the bulk spectrum and $s>1$, we define the parameters
\begin{equation}\label{e:zlam}
z_{\lambda}(G) = \min_{(x,y)\in B(G)} |\Im \zeta_y^{\lambda+\ii0}(x)| \,,
\end{equation}
\begin{equation}\label{e:zslam}
Z_{s,\lambda}(G) = \max_{(x_0,x_1)\in B(G)} \sum_{x_2\in \cN_{x_1}\setminus \{x_0\}} \frac{|\zeta_{x_0}^{\lambda+\ii0}(x_1)|^{2s}|\Im\zeta_{x_1}^{\lambda+\ii0}(x_2)|^s}{|\Im\zeta_{x_0}^{\lambda+\ii0}(x_1)|^s},
\end{equation}
where $\cN_{x_1}$ denotes the set of neighbors of $x_1$.
\end{defa}

For example, if $G$ is $(q+1)$-regular and $W\equiv 0$, then $\widetilde{G}=\T_q$ is the $(q+1)$-regular tree and the bulk spectrum is $(-2\sqrt{q},2\sqrt{q})$ --- also known as the \emph{tempered} spectrum. Here, $z_{\lambda} = \frac{\sqrt{4q-\lambda^2}}{2q}$ and $Z_{s,\lambda}=q^{1-s}$.

In general, it follows from Theorem~\ref{thm:specone} that $z_{\lambda}(G)$ and $Z_{s,\lambda}(G)$ are well-defined, with $z_{\lambda}(G)>0$. Let us briefly explain the meaning of these parameters.

Recall that for $\lambda$ in the AC spectrum, the density of the spectral measure of $H_{\cT}$ at a vertex $v$ is given by $\frac{1}{\pi}\Im G^{\lambda+\ii0}_{\cT}(v,v)$. In case of trees, there are recursive relations between the Green functions $G^{\lambda+\ii0}_{\cT}(v,v)$ of $\cT$ and the Green functions of the subtrees $\cT^{(v|w)}$, so $z_{\lambda}$ is related to the minimal spectral density at $\lambda$.

To understand $Z_{s,\lambda}$, note that for $s=1$, we actually have $Z_{1,\lambda}=1$. More precisely, $\sum_{x_2\in \cN_{x_1}\setminus \{x_0\}} \frac{|\zeta_{x_0}^{\lambda+\ii0}(x_1)|^{2}|\Im\zeta_{x_1}^{\lambda+\ii0}(x_2)|}{|\Im\zeta_{x_0}^{\lambda+\ii0}(x_1)|}=1$ for any $(x_0,x_1)$. This can be viewed as a kind of ``conservation of current'' relation akin to Kirchhoff's law : if we fix some origin $o\in\cT$ such that the edge $(\tilde{x}_0,\tilde{x}_1)$ descends from $o$ (i.e. there is a non-backtracking path $(o,v_1,\dots,v_m,\tilde{x}_0,\tilde{x}_1)$), then $\zeta_{x_0}^{\lambda+\ii0}(x_1) = \frac{G^{\lambda+\ii0}(o,\tilde{x}_1)}{G^{\lambda+\ii0}(o,\tilde{x}_0)}$. Consequently, if we define the ``current'' $I_o^{\lambda}(v,w) = |G^{\lambda+\ii0}(o,v)|^2|\Im \zeta^{\lambda+\ii0}_v(w)|$, we get $I_o^{\lambda}(\tilde{x}_0,\tilde{x}_1) = \sum_{\tilde{x}_2\in \cN_{\tilde{x}_1}\setminus\{\tilde{x}_0\}} I_o^{\lambda}(\tilde{x}_1,\tilde{x}_2)$. In other words, the current on $(\tilde{x}_0,\tilde{x}_1)$ is preserved when passing through the forward edges $(\tilde{x}_1,\tilde{x}_2)$. A key idea for our $\ell^p$-norm estimate Theorem~\ref{thm:psupp} is to observe that if $s>1$, then this implies $Z_{s,\lambda}<1$, if $\min \deg G \ge 3$. This essentially allows us to say that for arbitrary graphs $(G,W)$, the Green function of the universal cover $|G_{\cT}^{\lambda+\ii0}(v_0,v_r)|$ on an $r$-path $(v_0,\dots,v_r)$ decays exponentially in $r$ (cf .\eqref{e:inftyboun}) and $\|G^{\lambda+\ii0}\delta_v\|_{2s}< \infty$ for all $s>1$ (cf \eqref{e:greensno}), for all $\lambda$ in the bulk spectrum. These facts can be proved for regular trees with $W\equiv 0$ by explicit calculation.

\smallskip

We now state our first general result on supremum norms. For this we only need $z_{\lambda}$.

\begin{thm}\label{thm:main}
Let $(G,W)$ be a graph of  minimal degree $\ge 2$. Let $\psi_{\lambda}$ be an eigenfunction of $H_G$, $\|\psi_{\lambda}\|_2=1$.
\begin{enumerate}[\rm (1)]
\item If $\lambda$ is in the bulk spectrum, we have
\[
\|\psi_{\lambda}\|_{\infty} \le \frac{8Dz_{\lambda}(G)^{-4}}{\sqrt{\ell_G}} \,,
\]
where $D$ is the maximal degree of $G$.
\item If $\lambda \notin\sigma(H_{\widetilde{G}})$, we have the much better bound
\[
\|\psi_{\lambda}\|_{\infty} \le \frac{8D}{\delta_{\lambda}} \left(1+\frac{\delta_{\lambda}}{2D}\right)^{-\ell_G} ,
\]
where, $\delta_{\lambda}=\dist(\lambda,\sigma(H_{\widetilde{G}}))$.
\end{enumerate}
\end{thm}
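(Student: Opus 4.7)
The plan is to bound $|\psi_\lambda(x_0)|$ at an arbitrary $x_0 \in G$ via a representation of $\psi_\lambda(x_0)$ as a weighted sum of its values on a sphere $S_r(x_0)$ of radius $r\le\ell_G$, the weights being products of the Green-function ratios $\zeta^{\lambda+\ii 0}$ on the universal cover $\widetilde G$. Bounds on these weights --- coming from $\delta_\lambda$ in part (2), from $z_\lambda(G)$ in part (1) --- combined with $\|\psi_\lambda\|_2 = 1$ and Cauchy-Schwarz, then yield the announced estimates.

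Fix $x_0$ and a lift $\tilde x_0 \in \widetilde G$. Since $B_G(x_0,r)$ has at most one cycle for $r\le\ell_G$, a spanning tree of this ball embeds into $\widetilde G$, the missing edge (if any) contributing only a boundary correction. Iterating the standard tree recursion for $\zeta^{\lambda+\ii 0}$, applied to $\psi_\lambda$ restricted to shrinking subtrees --- a Schur complement along non-backtracking paths --- one obtains the representation
$$\psi_\lambda(x_0) \;=\; \sum_{(\tilde x_0,\tilde x_1,\ldots,\tilde x_r)} \Phi_r(\tilde x_r)\,\psi_\lambda(\pi\tilde x_r),\qquad \Phi_r(\tilde x_r):=\prod_{i=0}^{r-1}\zeta_{\tilde x_i}^{\lambda+\ii 0}(\tilde x_{i+1}),$$
where the sum runs over non-backtracking paths of length $r$ in $\widetilde G$ starting at $\tilde x_0$ and $\pi$ is the covering projection. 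This is the tree analogue of the one-dimensional transfer-matrix identity behind Theorem~\ref{thm:cycles}.

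For part (2), $\lambda\notin\sigma(H_{\widetilde G})$ yields the Combes-Thomas-type bound $|\zeta^{\lambda+\ii 0}|\le 1/\delta_\lambda$ with an additional geometric decay factor $(1+\delta_\lambda/(2D))^{-1}$ gained on each subsequent step of the product, so that $|\Phi_r|\le \delta_\lambda^{-1}(1+\delta_\lambda/(2D))^{-(r-1)}$. Taking $r=\ell_G$, bounding $|\psi_\lambda(\pi\tilde x_r)|\le\|\psi_\lambda\|_2 = 1$, and noting that the $\le D(D-1)^{r-1}$ non-backtracking paths of length $r$ are dominated by the geometric decay (the factor $2D$ in the Combes-Thomas denominator absorbs the $D-1$ branching) delivers the exponential bound.

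For part (1), the $\Phi_r$ are complex with no pointwise decay, but the ``conservation of current'' identity recorded after \eqref{e:zslam} --- iterated along non-backtracking paths and combined with $|\Im\zeta^{\lambda+\ii 0}|\ge z_\lambda(G)$ at the telescoping endpoints --- produces a uniform bound of the form $\sum_{\tilde x_r}|\Phi_r(\tilde x_r)|^2 \le 64\,D^2\,z_\lambda(G)^{-8}$ for $r\le\ell_G$. Cauchy-Schwarz then gives, for each such $r$,
$$|\psi_\lambda(x_0)|^2 \;\le\; \Bigl(\sum_{\tilde x_r}|\Phi_r(\tilde x_r)|^2\Bigr)\sum_{x_r\in S_r(x_0)}|\psi_\lambda(x_r)|^2,$$
and summing this over $r=1,\ldots,\ell_G$ against $\sum_r \sum_{x_r\in S_r(x_0)}|\psi_\lambda(x_r)|^2\le\|\psi_\lambda\|_2^2=1$ delivers the $1/\sqrt{\ell_G}$ decay. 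The hard part will be the bookkeeping: justifying the representation formula in the presence of at most one cycle in $B_G(x_0,\ell_G)$, passing to the $\eta\downarrow 0$ limit in the Green function expressions, and controlling the telescoping sums tightly enough to reach the sharp power $z_\lambda(G)^{-4}$ rather than the crude one suggested above.
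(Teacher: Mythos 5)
Your overall architecture is the right one and matches the paper's: a sphere representation of $\psi_\lambda(x_0)$ with Green-function weights, an $\ell^2$ bound on the weights via conservation of current (bulk) or Combes--Thomas (outside the spectrum), then Cauchy--Schwarz and an average over the radii $r\le\ell_G$ to produce $1/\sqrt{\ell_G}$. However, two of your key steps are false as stated. First, the representation $\psi_\lambda(x_0)=\sum_{(\tilde x_0;\tilde x_r)}\Phi_r(\tilde x_r)\,\psi_\lambda(\pi\tilde x_r)$ does not hold: test it with $r=1$ on a $(q+1)$-regular graph with $W\equiv 0$, where it would force $\zeta^{\lambda+\ii 0}=1/\lambda$, contradicting $\zeta^{\lambda+\ii0}=\frac{\lambda-\ii\sqrt{4q-\lambda^2}}{2q}$. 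Because the $\zeta$'s are complex while $\psi_\lambda$ can be taken real, no identity of this one-term form can be correct. The true identity (Proposition~\ref{prp:retour}) necessarily involves either the Wronskian-type combination $G^{\lambda}(\tilde x_0,\tilde x_{r+1})\psi_\lambda(x_r)-G^{\lambda}(\tilde x_0,\tilde x_r)\psi_\lambda(x_{r+1})$ as in \eqref{e:outrep}, or, in the bulk, the difference of imaginary parts of the products divided by $|\Im\zeta^{\lambda}_{x_0}(x_1)|$ as in \eqref{e:bulkrep}; deriving it is the real content of the proof (via the non-backtracking eigenfunction identity $\cB f_\lambda=\frac{1}{\zeta^\lambda}f_\lambda$ for $f_\lambda(x_0,x_1)=\psi_\lambda(x_1)-\zeta^\lambda_{x_0}(x_1)\psi_\lambda(x_0)$, combined with its time-reversed or conjugate counterpart), not bookkeeping to be deferred.

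Second, in part (2) your claim that the $(D-1)^{r-1}$ branching of non-backtracking paths is ``absorbed'' by the factor $(1+\frac{\delta_\lambda}{2D})^{-r}$ is quantitatively wrong: for small $\delta_\lambda$ this decay rate is arbitrarily close to $1$ while the branching factor is at least $2$, so the triangle inequality with pointwise bounds on $\Phi_r$ and $|\psi_\lambda|\le 1$ diverges. The correct route is the summed form of Combes--Thomas, $\sum_{y:\,d(\tilde x_0,y)=n}|G^\lambda(y,\tilde x_0)|^2\le \frac{4}{\delta_\lambda^2}(1+\frac{\delta_\lambda}{2D})^{-2n}$ (Lemma~\ref{lem:CT}), which already accounts for the cardinality of the sphere, used in Cauchy--Schwarz against $\sum_{x_r}|\psi_\lambda(x_r)|^2\le\|\psi_\lambda\|_2^2$ rather than against the sup of $|\psi_\lambda|$. (Relatedly, extracting a pointwise bound on $\Phi_r=G^\lambda(\tilde x_0,\tilde x_r)/G^\lambda(\tilde x_0,\tilde x_0)$ from Combes--Thomas needs a lower bound on $|G^\lambda(\tilde x_0,\tilde x_0)|$ that you do not have; work with the $G$'s directly.) Finally, your treatment of the single cycle in $B_G(x_0,\ell_G)$ as ``only a boundary correction'' hides a genuine issue: when $x_0$ lies on the cycle and has degree $2$, infinitely many winding paths contribute to the same target edge, and one needs the contraction $|\zeta^{\lambda}_{u_0}(u_1)\cdots\zeta^{\lambda}_{u_m}(u_0)|\le 1-\frac{z_\lambda^2}{4}$ (Proposition~\ref{prp:zetacyc}) to sum the resulting geometric series; this is also where the exponent $z_\lambda^{-4}$ in the statement comes from.
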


The fact that eigenfunctions have better delocalization properties outside the spectrum of $H_{\widetilde{G}}$ was already known in case of $(q+1)$-regular graphs with $W=0$. This region corresponds to the ``untempered spectrum''. In this case, \cite[Lemma 3.1]{BL17} implies that
\begin{equation}\label{e:bl17}
\|\psi_{\lambda}\|_{\infty} \lesssim \frac{1}{q^{c_{\lambda} \ell_G}} \,.
\end{equation}
In particular, if $\ell_G \ge c \log_q|G|$, which holds for a typical random graph, this gives
\begin{equation}\label{e:untem}
\|\psi_{\lambda}\|_{\infty} \lesssim \frac{1}{|G|^{\varepsilon_{\lambda}}} \,.
\end{equation}
This very simple result somehow complements the bounds $\|\psi_{\lambda}\|_{\infty} \lesssim \frac{\log |G|}{|G|^{1/2}}$ proved for random graphs in \cite{BHY} for $\lambda$ in the \emph{bulk} of the spectrum (this said, \eqref{e:untem} is of course much easier to prove). In our case, we know we can still take $\ell_G = \frac{1}{5} \log_{D-1}|G|$ if $G$ is a random $N$-lift of a base graph $G_1$, yielding $\|\psi_{\lambda}\| \lesssim \frac{1}{|G|^{\frac{1}{5}\log_{D-1}(1+\frac{\delta_{\lambda}}{2D})}}$. In contrast, our estimates in the bulk yield $\|\psi_{\lambda}\|_{\infty} \lesssim \frac{1}{(\log_{D-1}|G|)^{1/2}}$ generically.

We point out that as the graph grows large, most eigenvalues will lie in $\sigma(H_{\widetilde{G}})$. More precisely, if for example $G_N$ is a random $N$-lift of $G_1$, it follows from the estimates in \cite[Lemma 24]{Bornew}, \cite[Lemma 9]{BDGHT} that $(G_N,W_N)$ converges to the universal cover $(\widetilde{G}_1,\widetilde{W}_1)$ in the sense of Benjamini-Schramm with high probability. It is known that Benjamini-Schramm convergence implies the convergence of spectral measures, so the observation follows.

\begin{rem}
In view of Theorem~\ref{thm:specone}, there are two finite sets of exceptional energies that are not considered in Theorem~\ref{thm:main}~: the set $\mathfrak{F}$ of infinitely degenerate eigenvalues of $H_{\widetilde{G}}$, and the set $\mathfrak{F}'$ of endpoints of the AC spectrum. Note that $\mathfrak{F}=\emptyset$ for $(q+1)$-regular graphs with $W=0$ and $\mathfrak{F}'=\{\pm 2\sqrt{q}\}$. We believe it is natural to exclude $\mathfrak{F}$ from our considerations; in fact we expect any eigenfunction $\psi_{\lambda}$ of $H_G$ with eigenvalue in $\mathfrak{F}$ to be \emph{localized} (see the end of this section for an example). On the other hand, the fact that we exclude $\mathfrak{F}'$ from our analysis may be an artefact of our method. For example, we know from \cite{BL17} that it is not necessary to exclude $\mathfrak{F}'$ if the graph is regular and $W\equiv 0$.
\end{rem}

\medskip

As an easy consequence of Theorem~\ref{thm:main}, one gets

\begin{cor}
\phantomsection
\label{cor:bl}
\begin{enumerate}[\rm (i)]
\item \emph{($p$-norms).} For $\lambda$ in the bulk and $p>2$,
\[
 \|\psi_{\lambda}\|_p \lesssim \frac{1}{(\ell_G)^{\frac{p-2}{2p}}}\,.
\]
For $\lambda\notin\sigma(H_{\widetilde{G}})$, we have
\[
\|\psi_{\lambda}\|_p \lesssim \left(1+\frac{\delta_{\lambda}}{2D}\right)^{-\frac{p-2}{p} \ell_G} \,.
\]
\item \emph{(Non-localization).} Let $\Lambda\subset G$ and suppose $\|\chi_{\Lambda} \psi_{\lambda}\|_2^2 \ge \varepsilon$.

If $\lambda$ is in the bulk of the spectrum, then $|\Lambda| \ge \ell_G \cdot \frac{z_{\lambda}(G)^8\varepsilon}{64D^2}$.

If $\lambda\notin \sigma(H_{\widetilde{G}})$, then $|\Lambda| \ge \frac{\delta_{\lambda}^2}{64D^2} (1+\frac{\delta_{\lambda}}{2D})^{2\ell_G} \varepsilon$.
\end{enumerate}
\end{cor}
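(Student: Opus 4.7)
Both parts follow quickly from Theorem~\ref{thm:main} by the standard $\|\cdot\|_2$--$\|\cdot\|_\infty$ interpolation, so the plan is really just to unpack the inequalities.

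For part (i), the key observation is that for any $\ell^2$-normalized vector and any $p>2$,
\[
\|\psi_{\lambda}\|_p^p = \sum_{x\in G}|\psi_{\lambda}(x)|^{p-2}|\psi_{\lambda}(x)|^2 \le \|\psi_{\lambda}\|_{\infty}^{p-2}\|\psi_{\lambda}\|_2^2 = \|\psi_{\lambda}\|_{\infty}^{p-2},
\]
so $\|\psi_{\lambda}\|_p \le \|\psi_{\lambda}\|_{\infty}^{(p-2)/p}$. I would then substitute the two bounds of Theorem~\ref{thm:main}: in the bulk case, $\|\psi_{\lambda}\|_{\infty}\lesssim \ell_G^{-1/2}$ gives $\|\psi_{\lambda}\|_p \lesssim \ell_G^{-(p-2)/(2p)}$; outside the spectrum of $H_{\widetilde{G}}$, the exponential bound $\|\psi_{\lambda}\|_{\infty}\lesssim (1+\tfrac{\delta_\lambda}{2D})^{-\ell_G}$ yields $\|\psi_{\lambda}\|_p \lesssim (1+\tfrac{\delta_\lambda}{2D})^{-(p-2)\ell_G/p}$. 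The implicit constants depend on $z_{\lambda}(G), D, \delta_{\lambda}$ through Theorem~\ref{thm:main}.

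For part (ii), I would simply bound
\[
\varepsilon \le \|\chi_{\Lambda}\psi_{\lambda}\|_2^2 = \sum_{x\in \Lambda}|\psi_{\lambda}(x)|^2 \le |\Lambda|\cdot \|\psi_{\lambda}\|_{\infty}^2,
\]
so that $|\Lambda| \ge \varepsilon/\|\psi_{\lambda}\|_{\infty}^2$. Inserting the bulk bound $\|\psi_{\lambda}\|_{\infty}^2 \le 64 D^2 z_{\lambda}(G)^{-8}/\ell_G$ gives the first claim $|\Lambda| \ge \ell_G z_{\lambda}(G)^8\varepsilon/(64D^2)$; inserting the off-spectrum bound $\|\psi_{\lambda}\|_{\infty}^2 \le 64 D^2 \delta_{\lambda}^{-2}(1+\tfrac{\delta_\lambda}{2D})^{-2\ell_G}$ gives the second, $|\Lambda|\ge \delta_{\lambda}^2(1+\tfrac{\delta_\lambda}{2D})^{2\ell_G}\varepsilon/(64D^2)$.

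There is no real obstacle here, since the corollary is literally an interpolation reformulation of the supremum-norm theorem; the only thing to be careful about is to cite Theorem~\ref{thm:main} separately in each of the two regimes (bulk spectrum vs.\ outside $\sigma(H_{\widetilde{G}})$) and to keep track of the constants $8Dz_{\lambda}(G)^{-4}$ resp.\ $8D/\delta_{\lambda}$ when squaring. No additional machinery (e.g.\ information on $Z_{s,\lambda}$, Green-function analysis, or properties of the universal cover) is needed for the corollary itself — everything delicate has already been absorbed into the statement of Theorem~\ref{thm:main}.
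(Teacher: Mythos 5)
Your proposal is correct and coincides with the paper's own proof: the same interpolation $\|\psi_{\lambda}\|_p \le \|\psi_{\lambda}\|_{\infty}^{(p-2)/p}\|\psi_{\lambda}\|_2^{2/p}$ for part (i) and the same bound $\varepsilon \le |\Lambda|\,\|\psi_{\lambda}\|_{\infty}^2$ for part (ii), with the constants from Theorem~\ref{thm:main} squared exactly as you indicate. Nothing is missing.
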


This corollary is deduced from the supremum bounds. The results are satisfactory for $\lambda\notin\sigma(H_{\widetilde{G}})$. For $\lambda$ in the bulk however, the estimate on $p$-norms is not sharp. For example, we have $\|\psi_{\lambda}\|_p \lesssim \frac{1}{\sqrt{\ell_G}}$ for all $p>2$ in the regular case \cite{BL17} (where the implied constant depends on $p$). As for non-localization, much better lower bounds $|\Lambda| \gtrsim q^{c \varepsilon^2 \ell_G} \varepsilon^2$ were established for $(q+1)$-regular graphs in \cite{BL}.

We were able to obtain sharper $p$-norm bounds as in \cite{BL17}, but for $p>4$. The ingredients are the properties of $Z_{s,\lambda}$ discussed after \eqref{e:zslam}, a $TT^{\ast}$ analysis and Young's inequality, which requires us to take $s=p/4$. This is why we need $p>4$. One may hope to use these ideas but avoid Young's inequality to have an estimate for all $p>2$. As for non-localization, we can also answer the special case where $\Lambda$ is the support of $\psi_{\lambda}$~:

\begin{thm}\label{thm:psupp}
Let $(G,W)$ be a graph of minimal degree $\ge 3$. Let $\psi_{\lambda}$ be an eigenfunction of $H_G$, $\|\psi_{\lambda}\|_2=1$. Assume $\lambda$ is in the bulk spectrum of $H_G$. Then
\begin{enumerate}[\rm (1)]
\item \emph{($p$-norms)} For any $p>4$,
\[
\|\psi_{\lambda}\|_p \le \frac{8D z_{\lambda}(G)^{-5}}{1-Z_{p/4,\lambda}^{2/p}(G)} \cdot \frac{1}{\sqrt{\ell_G}} \,.
\]
\item \emph{(Support)} If $\Lambda$ is the support of $\psi_{\lambda}$, let $M_{\lambda}=Z_{2,\lambda}^{-1/4}>1$. Then
\[
|\Lambda| \ge \frac{1}{4D} M_{\lambda}^{\ell_G} \,.
\]
\end{enumerate}
\end{thm}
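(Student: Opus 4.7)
Both items will follow from a $TT^*$ analysis of the resolvent $T=G_G^{\lambda+\ii\eta}$ seen as a map $\ell^2\to\ell^p$, together with the identity $\psi_\lambda=\eta\,(\Im G_G^{\lambda+\ii\eta})\psi_\lambda$, valid for every $\eta>0$ because $\psi_\lambda$ is an eigenvector of eigenvalue $\lambda$. Self-adjointness and the first resolvent identity give $TT^*=\eta^{-1}\Im G_G^{\lambda+\ii\eta}$, so
\[
\|\psi_\lambda\|_p^{2}\le \eta^{2}\,\|T\|_{\ell^2\to\ell^p}^{2}=\eta\,\|\Im G_G^{\lambda+\ii\eta}\|_{\ell^{p'}\to\ell^p}.
\]
The task thus reduces to controlling the right-hand side, which I plan to do by comparing $G_G^{\lambda+\ii\eta}$ on the ball $B(v,\ell_G)$ to the universal-cover Green function $G_{\widetilde G}^{\lambda+\ii 0}$ and then exploiting the decay encoded in $z_\lambda(G)$ and $Z_{s,\lambda}(G)$.

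\textbf{Execution.} Step~1 is a \emph{Green function approximation}: using the resolvent identity around the boundary sphere $S(v,\ell_G)$, one expands $G_G^{\lambda+\ii\eta}(v,w)$ for $d(v,w)\le\ell_G$ as a product of one-edge Green functions $\zeta^{\lambda+\ii\eta}$ along the (essentially unique) path from $v$ to $w$, plus a boundary remainder controlled by inverse powers of $z_\lambda(G)$ via the uniform bound $|\Im\zeta|\ge z_\lambda(G)$. Step~2, for part~(1), applies Young's inequality: taking $s=p/4>1$ so that $2s=p/2$, the relation $1+\tfrac{1}{p}=\tfrac{2}{2s}+\tfrac{1}{p'}$ yields
\[
\|\Im G_{\widetilde G}^{\lambda+\ii 0}\|_{\ell^{p'}\to\ell^p}\le \sup_v \|G_{\widetilde G}^{\lambda+\ii 0}\delta_v\|_{\ell^{2s}}^{2},
\]
and the right-hand side is finite by iterating the current-conservation bound
\[
\sum_{x_2\in\cN_{x_1}\setminus\{x_0\}}\frac{|\zeta_{x_0}^{\lambda+\ii 0}(x_1)|^{2s}\,|\Im\zeta_{x_1}^{\lambda+\ii 0}(x_2)|^{s}}{|\Im\zeta_{x_0}^{\lambda+\ii 0}(x_1)|^{s}}\le Z_{s,\lambda}<1
\]
shell by shell on the universal cover, producing a geometric series of ratio $Z_{p/4,\lambda}^{2/p}$ and hence the denominator $(1-Z_{p/4,\lambda}^{2/p})^{-1}$. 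Optimising $\eta\sim\ell_G^{-1}$ to balance the boundary error from Step~1 against this bulk estimate then yields the $\ell_G^{-1/2}$ and $z_\lambda^{-5}$ factors. Step~3, for part~(2), begins with Cauchy--Schwarz on the support,
\[
1=\|\psi_\lambda\|_2^{2}\le|\Lambda|^{1/2}\,\|\psi_\lambda\|_4^{2},\qquad\text{so}\qquad |\Lambda|\ge \|\psi_\lambda\|_4^{-4}.
\]
Young's inequality degenerates at $p=4$ because $Z_{1,\lambda}=1$, so instead one iterates the current-conservation bound \emph{at $s=2$} directly to obtain $\sum_{w\in S(v,r)}|G_{\widetilde G}^{\lambda+\ii 0}(v,w)|^{4}\lesssim Z_{2,\lambda}^{r}$ on each sphere; summing through Step~1 over $r\le\ell_G$ yields $\|\psi_\lambda\|_4^{4}\le 4D\cdot Z_{2,\lambda}^{\ell_G/4}$ (the exponent $\tfrac14$ emerges from a further Cauchy--Schwarz balancing the $D^r$ growth of $|S(v,r)|$ against $Z_{2,\lambda}^{r}$). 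Substituting gives $|\Lambda|\ge (4D)^{-1}M_\lambda^{\ell_G}$ with $M_\lambda=Z_{2,\lambda}^{-1/4}$.

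\textbf{Main obstacle.} The most delicate ingredient is Step~1: the finite resolvent $G_G^{\lambda+\ii\eta}$ has a genuine pole at $\lambda$ which must be isolated from the tree-like contribution, and uniform control as $\eta\to 0$ rests on the strict positivity $z_\lambda(G)>0$ guaranteed by Theorem~\ref{thm:specone}. A secondary difficulty is the single cycle potentially present in $B(v,\ell_G)$, which requires either a Krein-type adjustment of the tree formula or a restriction to $B(v,\rho_G)$ at additional cost. Conceptually, the whole proof is driven by the strict gap $Z_{s,\lambda}<1$, available for $s>1$ when $\min\deg G\ge 3$ and failing at $s=1$; this is exactly why part~(1) is stated for $p>4$ and why part~(2) needs the separate combinatorial argument at $s=2$ rather than a direct specialisation of part~(1).
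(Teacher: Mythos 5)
Your high-level ingredients are the right ones (a $TT^*$ argument, Young's inequality at $s=p/4$, the strict inequality $Z_{s,\lambda}<1$ for $s>1$ under $\min\deg\ge 3$, and the correct diagnosis of why $p>4$ appears), but the operator you apply $TT^*$ to is the wrong one, and this creates a gap that your sketch does not close. You work with the \emph{finite-graph} resolvent $T=G_G^{\lambda+\ii\eta}$ and propose to take $\eta\sim\ell_G^{-1}$ after comparing $G_G^{\lambda+\ii\eta}$ to the universal-cover Green function on $B(v,\ell_G)$. For a \emph{deterministic} graph this comparison is precisely the hard ``local law'' that is only known with high probability for random graphs (the route of \cite{BHY}, cf.\ the discussion around \eqref{e:borgei}), and your proposed error control fails quantitatively: the boundary remainder in the resolvent expansion carries a factor $\|G_G^{\lambda+\ii\eta}\|\le\eta^{-1}\sim\ell_G$, while the $\ell^2$-mass of the tree Green function on the sphere of radius $r$ does \emph{not} decay in $r$ --- by the current-conservation identity \eqref{e:recurpath} it is exactly of order one --- so the remainder is $O(\ell_G)$, not $o(1)$. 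Moreover the product formula $G=\prod\zeta$ along ``essentially unique paths'' needs the ball to be a tree, so at best you would get a bound in terms of $\rho_G$, not $\ell_G$. The paper avoids the finite resolvent entirely: Proposition~\ref{prp:retour} gives an \emph{exact} identity expressing $\psi_\lambda$ through the non-backtracking transfer operator $(\zeta^{\lambda}\cB)^r$ applied to the edge functions $f_\lambda,g_\lambda$ of \eqref{e:fglambda}; one averages over $1\le r\le\ell_G$ to form $\cM_{n,\lambda}$ on $\ell^2(B)$, and the $TT^*$ plus Young analysis is carried out on $\cM_{n,\lambda}\cM_{n,\lambda}^*$, whose kernel is computed combinatorially. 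The $\ell_G^{-1/2}$ comes from this Ces\`aro average, with no parameter $\eta$ anywhere.

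For part (2) the gap is sharper. Your route requires the intermediate bound $\|\psi_\lambda\|_4^4\le 4D\,Z_{2,\lambda}^{\ell_G/4}$, i.e.\ exponential decay of the $4$-norm in $\ell_G$, and you propose to obtain the exponent $\tfrac14$ by ``a further Cauchy--Schwarz balancing the $D^r$ growth of $|S(v,r)|$ against $Z_{2,\lambda}^r$''. That balancing step is not valid: $D\cdot Z_{2,\lambda}$ need not be smaller than $1$ (already for $(q+1)$-regular graphs $D^rZ_{2,\lambda}^r=(1+1/q)^r$ grows), and no bound on $\|\psi_\lambda\|_4$ is actually needed. The paper's argument is a direct $\ell^1$--$\ell^2$--$\ell^\infty$ duality on the support $S$ of the non-backtracking eigenfunction: from $(\zeta^{\lambda}\cB)^rf_\lambda=f_\lambda$ one gets $\|f_\lambda\|_2^2\le\|f_\lambda\chi_S\|_1\,\|(\zeta^{\lambda}\cB)^rf_\lambda\|_\infty$, the sup norm is controlled by the pointwise decay \eqref{e:inftyboun} (which is where $Z_{2,\lambda}^{r/4}$ enters), and Cauchy--Schwarz on $\|f_\lambda\chi_S\|_1^2\le|S|\,\|f_\lambda\|_2^2$ yields $|S|\gtrsim Z_{2,\lambda}^{-r/4}$; one then passes from $S$ to $\supp\psi_\lambda$ via $|S|\le 2D|\Lambda|$. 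You should redo both parts with the non-backtracking representation of Proposition~\ref{prp:retour} as the starting point rather than the finite-graph resolvent.
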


Recall that if $(G,W)=(G_N,W_N)$ is an $N$-lift of some $(G_1,W_1)$, then $\ell_G \ge \frac{1}{5} \log_{D-1} |G|$ with high probability. In this case we get that the support of any $\psi_{\lambda}$ satisfies
\[
|\Lambda| \ge \frac{1}{4D} |G|^{\alpha_{\lambda}} \,,
\]
with $\alpha_{\lambda} = \frac{1}{5\log_{M_{\lambda}}(D-1)} = \frac{\ln M_{\lambda}}{5\ln (D-1)} >0$. This shows that the support of eigenfunctions (whether in the bulk or outside $\sigma(H_{\widetilde{G}})$) is generically a fractional power of the number of vertices --- a large set.

\subsection{Sequences of graphs}\label{sec:seq}

The previous results hold for any fixed graph $(G,W)$, in the prescribed energy regions.

An important question is to study sequences of graphs $(G_N,W_N)$ that grow larger as $N\to \infty$. In this case, to ensure that $\|\psi_{\lambda}\|_p \lesssim \frac{1}{\sqrt{\ell_{G_N}}}$ and $|\supp \psi_{\lambda}| \gtrsim K_{\lambda}^{\ell_{G_N}}$ with $K_{\lambda}>1$, we must keep track of the implicit constants and show they are controlled as $N$ gets large. This is the reason why we presented explicit constants in Theorems~\ref{thm:main} and \ref{thm:psupp}.

If $(G_N,W_N)$ are $(q+1)$-regular graphs with $W\equiv 0$, the situation is very simple~: we have $\sigma(H_{\widetilde{G}_N}) = [-2\sqrt{q},2\sqrt{q}]$ for all $N$, $\zeta^{\lambda}_y(x) = \frac{\lambda-\ii\sqrt{4q-\lambda^2}}{2q}$ for any $(x,y)$, independently of $G_N$, is the unique solution of $q\zeta^2-\lambda\zeta+1=0$ with negative imaginary part. In particular, $z_{\lambda}(G_N) = \frac{\sqrt{4q-\lambda^2}}{2q}$ independently of $N$ and $Z_{s,\lambda}(G_N) = q q^{-s} = q^{-(s-1)}$. Hence, the constants here are independent of $N$ and we have $\|\psi_{\lambda}\|_p \lesssim \frac{1}{\sqrt{\ell_{G_N}}}$ and $|\supp \psi_{\lambda}| \gtrsim M_{\lambda}^{\ell_{G_N}}$ for the whole sequence.

A more interesting class of graphs we can consider is given by $N$-lifts $\{(G_N,W_N)\}$, with $(G_N,W_N)$ the $N$-lift of some base graph $(G_1,W_1)$ say on $r$ vertices, so that $G_N$ has $Nr$ vertices. In this case, all graphs have the same universal cover $(\widetilde{G}_1,\widetilde{W}_1)$. In particular, $\sigma(H_{\widetilde{G}_N}) = \sigma(H_{\widetilde{G}_1})$ for all $N$ and we may consider an interval of energy $I$, say in the common bulk. On the other hand, since $(G_N,W_N)$ covers $(G_1,W_1)$, $\zeta_y^{\gamma}(x) = \zeta_{y'}^{\gamma}(x')$ whenever $\pi_N(x,y)=\pi_N(x',y')$, where $\pi_N:G_N\to G_1$ is the covering map. Indeed, $\zeta_y^{\gamma}(x) = \zeta_{\pi_Ny}^{\gamma}(\pi_Nx)$, as both are defined by $\zeta_{\tilde{y}}^{\gamma}(\tilde{x})$ where $\tilde{x},\tilde{y}\in\widetilde{G}_1$ are lifts to the universal cover. Hence, the $\{\zeta_y^{\gamma}(x)\}_{(x,y)\in B(G_N)}$ are just those of $G_1$. There are at most $Dr$ of them. In particular, $z_{\lambda}(G_N)=z_{\lambda}(G_1)$ and $Z_{s,\lambda}(G_N)=Z_{s,\lambda}(G_1)$, and we have again $\|\psi_{\lambda}\|_p \lesssim \frac{1}{\sqrt{\ell_{G_N}}}$ and $|\supp \psi_{\lambda}| \gtrsim M_{\lambda}^{\ell_{G_N}}$ for all $N$.

An example, however, to which our results do not seem to apply, is a sequence of Anderson models $(G_N,W_N^{\omega})$, where $G_N$ are regular graphs with few cycles and $W_N^{\omega}(x):=\omega_x$, with $(\omega_x)_{x\in V(G_N)}$ some i.i.d. random variables. We thus have a Schr\"odinger operator $H_{G_N}^{\omega} = \cA_{G_N} + W_N^{\omega}$ for each $\omega=(\omega_x)_{x\in V(G_N)}$. See \cite{AW15,AS3} for some background on this model. It is not clear if the parameters $z_{\lambda}(G_N)$ and $Z_{s,\lambda}(G_N)$ are asymptotically well-behaved. Moreover, there is a distinct universal cover for each $N$, does the bulk spectrum remain asymptotically large, or are we excluding too many points in $\mathfrak{F}_N$ as $N$ grows big~? On the other hand, it is natural that our results do not apply directly in this case, since otherwise we would have a statement for \emph{all $\omega$} instead of an almost-sure statement.

We expect our results to be true \emph{in mean} for the weakly disordered Anderson model, almost surely (e.g. \emph{most} (not all) eigenfunctions in the \emph{limiting AC spectrum} satisfy $\|\psi_{\lambda}\|_p\lesssim \frac{1}{\sqrt{\ell_G}}$ a.s.). Here the ``limiting AC spectrum'' is the AC spectrum of the Benjamini-Schramm limit of the sequence, replacing the bulk spectrum considered here. This different statement however (which would be weaker but apply to more general models) is still open. We mention that there is a heated debate among physicists about the ergodicity versus multi-fractility of the weakly disordered Anderson model on \emph{random} regular graphs \cite{Altshuler1,Altshuler2,TMS,MC}. One of the questions is whether $\|\psi_{\lambda}\|_p \lesssim \frac{1}{|G|^{\frac{1}{2}-\frac{1}{p}}}$ with high probability. This problem is completely out of reach at the moment (at least to us).

\subsection{Further remarks}

\begin{itemize}
\item The proof of Theorem~\ref{thm:main} yields more precisely
\[
|\psi_{\lambda}(x)| \le \frac{8Dz_{\lambda}^{-4}}{\sqrt{\ell_G(x)}} \,,
\]
where $\ell_G(x)$ is the largest $\ell\in\N$ such that $B_G(x,\ell)$ has at most one cycle. So even when $\ell_G$ is small, we can control the regions of the graph in which $\ell_G(x)$ is large.
\item If we introduce $\cZ_{\lambda} = \max_{(x_0,x_1,x_2)} \frac{|\zeta_{x_0}^{\lambda}(x_1)|^2|\Im\zeta_{x_1}^{\lambda}(x_2)|}{|\Im\zeta_{x_0}^{\lambda}(x_1)|}$ for bulk value $\lambda$, we have $\cZ_{\lambda}<1$ if $\min \deg G \ge 3$, and $Z_{s,\lambda} \le \cZ_{\lambda}^{s-1}$, cf. \S~\ref{e:greenp}. In particular, Theorem~\ref{thm:psupp} implies $\|\psi_{\lambda}\|_p \le \frac{8Dz_{\lambda}^{-5}}{1-\cZ_{\lambda}^{\frac{p-4}{2p}}} \frac{1}{\sqrt{\ell_G}}$, and one may take $p\to \infty$ to deduce a new proof of Theorem~\ref{thm:main} part (1). Note however that this only works for $\deg G \ge 3$, while Theorem~\ref{thm:main} allows for $\deg G \ge 2$. Moreover the proof of Theorem~\ref{thm:main} yields additional ``local'' information, as explained in the previous point. 
\item In the special case of biregular graphs\footnote{bipartite graphs with two types of vertices $\bullet,\circ$ where $\bullet$ has $d_1$ neighbors $\circ$, and $\circ$ has $d_2$ neighbors $\bullet$.}, our estimate on the support gives more precisely $|\Lambda|\ge \frac{[(d_1-1)(d_2-1)]^{\ell_G/4}}{4d_1}$, if $d_1 \ge d_2$. In particular, if $G$ is $(q+1)$-regular, $|\Lambda|\ge \frac{q^{\ell_G/2}}{4(q+1)}$. This estimate is sharper than \cite{BL} (which gives $|\Lambda| \gtrsim q^{2^{-8}\ell_G}$) and the proof is simpler, but the result only holds for the support, not general $\Lambda$.
\item All the results of Theorem~\ref{thm:psupp} also hold if we replace $\ell_G$ by $n\ge \ell_G$, as long as $n$ satisfies the following condition~: for any $\beta>0$, there are at most $2^{\beta k}$ paths of length $k$ between two edges, for each $k\le n$.

This condition was used in \cite{BL17}. While for a given graph, it may occur that such $n>\ell_G$, note that we always have $n\le \log_{D-1} |G|$, cf. \cite{BL17}. So for generic graphs, it suffices to consider $\ell_G$.
\item All results can be adapted to adjacency matrices with weights, namely $(\cA_p \psi)(x)=\sum_{y\sim x} p_x(y)\psi(y)$ with $p_x(y)$ positive and symmetric. This can be regarded as putting colors $p(b)$ on the edges. The finite graphs now take the form $(G,W,p)$, and the covers $(\widetilde{G},\widetilde{W},\widetilde{p})$.
\item The fact that eigenfunction delocalization depends on the behavior of the Green function was not apparent in \cite{BL17}, but is actually well-known, see \cite{Gei,BHY,Bor,AS2} to name a few references. For example, note that if $\psi_j$ is an eigenfunction of $H_G$ on $G$ with corresponding eigenvalue $\lambda_j$, then for any $\eta>0$,
\begin{equation}\label{e:borgei}
|\psi_j(x)|^2 \le \eta \sum_{k=1}^n \frac{\eta}{(\lambda_k-\lambda_j)^2+\eta^2} |\psi_k(x)|^2 = \eta \Im g^{\lambda_j+\ii\eta}(x,x)\,,
\end{equation}
where $g^{\gamma}(x,x) = \langle \delta_x,(H_G-\gamma)^{-1}\delta_x\rangle$ is the Green function \emph{of the finite graph}. If $|G|=N$ and if one can take $\eta \approx \frac{1}{N}$, and guarantee that $|\Im g^{\lambda+\frac{1}{N}}(x,x)|$ stays bounded, this implies that $|\psi_j(x)|^2 \lesssim \frac{1}{N}$, which is the ideal delocalization one can hope to achieve. This idea is implemented in \cite{BHY} in the framework of random regular graphs, so that most of the proof is devoted to controlling the Green function $g^{\gamma}$ and show that it is close to the Green function $G^{\gamma}$ of a tree-like graph, which is well-behaved.

We use a different idea in this paper~: we deal \emph{directly} with Green functions on trees (see Section~\ref{sec:op}), and the bounds come from the exponential decay of $G^{\lambda_j}(x_0,x_k
)$ with respect to the length of the path $(x_0;x_k)$, not from the parameter $\eta$ above.

Finally, we believe the approach of \cite{Gei,Bor} can be used to obtain supremum bounds of the form $\|\psi_{\lambda}\|_{\infty} \lesssim \sqrt{\frac{\log \rho_G}{\rho_G}}$ for the graphs considered in this paper. Such bounds are weaker than Theorem~\ref{thm:main} however, mainly because $\ell_G$ is generically of order $\log |G|$, while $\rho_G$ is not.
\item Our upper bound is in terms of $\ell_G$. It is natural to ask if we can go beyond this and prove strong delocalization bounds as in \cite{BHY}. In our framework of \emph{deterministic} graphs, one cannot hope for an upper bound involving $|G|$ directly. The presence of a few short cycles can create completely localized eigenfunctions, no matter how ``nice'' the rest of $G$ is. In fact, as shown in \cite[Figure 1]{BHY}, already in the context of $3$-regular graphs, the presence of a subgraph composed of two inverted triangles creates an eigenfunction supported on two points, no matter how large $G$ is. This shows that an upper bound involving $\ell_G$ is natural, but leaves the question open whether a decay better than $\ell_G^{-1/2}$ can be achieved for bulk eigenfunctions.

One may understand the decay $\ell_G^{-1/2}$ heuristically by drawing an analogy between graphs and manifolds, in which the semiclassical parameter $\lambda\to+\infty$ is replaced by $|G|\to +\infty$, so that the size of the graph is somehow analogous to the eigenvalue. In this case, the logarithmic decay $\ell_G^{-1/2}$ we obtain is similar to the logarithmic improvement obtained by Hassel and Tacy \cite{HT15} in the context of $L^p$ norms of eigenfunctions on manifolds. In both problems, one faces a difficulty to propagate beyond a certain time. In our case, beyond $\ell_G$, the number of paths between two points can very quickly become too big. With this analogy in mind, it seems unlikely to go beyond $\ell_G^{-1/2}$ with the current technology.

In the context of \emph{random} graphs however, we expect much better bounds than the ones achieved here to hold with high probability. We believe the present paper could provide a good starting point to prove \emph{complete delocalization} as in \cite{BHY}, but now in the framework of \emph{random $N$-lifts} of possibly irregular graphs.
\end{itemize} 

\medskip

We conclude this section with an example of graphs in which eigenfunctions localize on small regions, justifying why we exclude certain eigenvalues of the universal cover.

Consider a $6$-cycle $(x_0,\dots,x_5,x_0)$. Let $\psi(x_0)=\psi(x_3)=\frac{1}{2}$, $\psi(x_1)=\psi(x_4)=\frac{-1}{2}$, $\psi(x_2)=\psi(x_5)=0$. This gives an eigenfunction with eigenvalue $-1$. Now consider a long segment $(y_1,\dots,y_{3m-1})$. We glue this segment to the cycle, such that $y_1 \sim x_2$ and $y_{3m-1} \sim x_5$. We extend $\psi$ by $0$ on this new graph. Then $\psi$ is still an eigenfunction, localized on the small cycle. Here, $\|\psi_{\lambda}\|_{\infty} = \frac{1}{2}$ although $\ell_G$ is arbitrarily large (in particular, $\|\psi_{\lambda}\|_{\infty} \gg \ell_G^{-1/2}$).

On the other hand, $-1$ is an eigenvalue of the universal cover. To see this, root the tree at $x_0$ for example. Let $f(\tilde{x}_0)=1$, $f(\tilde{x}_1)=-1$, $f(\tilde{x}_5)=0$. This defines $f$ at the root and its neighbors. For the $2$-sphere, let $f(\tilde{x}_2)=0$, $f(\tilde{y}_{3m-1})=\frac{-1}{2}$, $f(\tilde{x}_4) = \frac{-1}{2}$. For further spheres, we follow this rule~: on each line segment, we extend $f$ using the only possible rule, namely $a,-a,0,a,-a,0,\dots$. Each time the tree splits (this occurs precisely at $\tilde{x}_2$, $\tilde{x}_5$), we let $f(\tilde{x}_2)=f(\tilde{x}_5)=0$. If $f$ has value $c$ at the parent of $\tilde{x}_2$ or $\tilde{x}_5$, we let $f(v)= \frac{-c}{2}$ on the two children. Note there are only two types of line segments~: those of length $2$, those of length $3m-1$. We thus find in any case that if $M=3m-1$, so that $2\le M$,
\[
\|f\|^2 \le M + 4M \left|\frac{1}{2}\right|^2 + 8 M \left|\frac{1}{4}\right|^2 + \dots \le M \sum_{k=0}^{\infty} 2^{k+1} \cdot 2^{-2k} < \infty \,,
\]
so $f$ is indeed an eigenvector of $H_{\widetilde{G}}$ with eigenvalue $-1$.

This example shows that energies in $\mathfrak{F}$ \emph{can} correspond to localized eigenfunctions on the finite graph. It would be interesting to study if this is always the case.

\section{Linking the eigenfunctions to the Green function on the  covering tree}\label{sec:op}

For any graph $\mathbb{G}$ and $v\in V(\mathbb{G})$, we denote $\cN_v = \{w\in V(\mathbb{G}):w\sim v\}$ the set of nearest neighbors of $v$. Directed edges are denoted by $b=(v,w)$. We let $o(b) = v$ be the origin of $b$ and $t(b)=w$ the terminus of $b$.

Let $G$ be a finite graph and $\cA_G$ its adjacency matrix. We are interested in the eigenfunctions $(\psi_{\lambda})$ of Schr\"odinger operators $H_G = \cA_G + W$ on $G$. Our aim in this section is to derive a convenient representation of $\psi_{\lambda}$ using the Green functions of the universal cover. For this, we start by investigating the spectral properties of such covering trees.

Throughout this section, we assume that

\medskip

\textbf{(C1)} The finite graph $G$ has minimal degree at least $2$ and is not a cycle.

\medskip

Let $\cT$ be a tree. If $v,w\in \cT$, $v\sim w$, we denote $\cT^{(v|w)}\subset \cT$ the subtree obtained by removing the branch emanating from $v$ passing by $w$ (keeping $v\in \cT^{(v|w)}$). We also denote $\zeta_w^{\gamma}(v) = -(H_{\cT}^{(v|w)}-\gamma)^{-1}(v,v)$, the (negative of the) corresponding Green function for $\gamma\in\C^+ = \{ \Im z>0\}$. Recall that Green's functions have the important property of being Herglotz, i.e. analytic on $\C^+$ and taking $\C^+$ to itself. Hence,
\begin{equation}\label{e:herg}
|\Im \zeta_w^{\gamma}(v)| = -\Im \zeta_w^{\gamma}(v) \,.
\end{equation}

For future reference, we recall the following identities \cite{KS,Klein,AS} for the Green function on a tree~: for any $\gamma\in\C^+$, $v\in\cT$, $w\sim v$,
\begin{equation}\label{e:rec}
\frac{1}{G^{\gamma}(v,v)} = W(v) + \sum_{u\sim v}\zeta_v^{\gamma}(u) - \gamma \,, \qquad 
\frac{-1}{\zeta_w^{\gamma}(v)} = W(v) + \sum_{u\in \cN_v\setminus \{w\}} \zeta_v^{\gamma}(u) - \gamma \,,
\end{equation}
\begin{equation}\label{e:zetainv}
\frac{1}{\zeta_w^{\gamma}(v)}-\zeta_v^{\gamma}(w) = \frac{-1}{G^{\gamma}(v,v)} \,, \qquad \zeta_w^{\gamma}(v) = \frac{G^{\gamma}(v,v)}{G^{\gamma}(w,w)}\zeta_v^{\gamma}(w) \,,
\end{equation}
\begin{equation}\label{e:greenmulti}
G^{\gamma}(v_0;v_k) = G^{\gamma}(v_0,v_0) \zeta_{v_0}^{\gamma}(v_1)\cdots\zeta_{v_{k-1}}^{\gamma}(v_k) \qquad G^{\gamma}(v,w)=G^{\gamma}(w,v)
\end{equation}
for any non-backtracking path $(v_0;v_k)$ in a tree $\cT$.

\begin{rem}[Trees of finite cone type] \label{rem:cones}
In this remark, we explain the consequence of assumption \textbf{(C1)} on the universal cover.

If $\T$ is a tree, fix an origin $o\in \T$ and regard the rest of the tree as descending from $o$. So $o$ has $\cN_o^+$ children which are its neighbors in $\T$, and each $v\neq o$ has a set $\cN_v^+$ of children and a single parent $v_-$. A \emph{cone} in $\T$ is a subtree $\T^{(v|v_-)}$, i.e. a subtree descending from $v$. We say that $\T$ is a tree of finite cone type if there are finitely many non-isomorphic cones.

This definition may be extended to allow potentials $W:\T\to\R$. One regards $(\T,W)$ as a ``colored tree'', where each vertex $v\in\T$ has a color $W(v)$. In this case, we say $(\T,W)$ is a tree of finite cone type if there are finitely many non-isomorphic colored subtrees $\T^{(v|v_-)}_W$.

For example, the $(q+1)$-regular tree $\T_q$ has only $2$ cone types : the cone at the origin (which has $q+1$ children) and the other cones (each of which has $q$ children). However, for the colored $(\T_q,W)$ to be of finite cone type, the potential $W$ must in particular take finitely many values.

For our purposes, the trees to keep in mind are the universal covers of finite graphs $(G,W)$. The potential is lifted to $\cT=\widetilde{G}$ naturally by $\cW(v) := W(\pi v)$, where $\pi:\cT\to G$ is the covering map. It is easy to see that $(\cT,\cW)$ is then a tree of finite cone type. In fact, the cones $\cT_\cW^{(v|v_-)}$ and $\cT_\cW^{(w|w_-)}$ are isomorphic whenever $\pi(v_-,v) = \pi(w_-,w) = (x,y)\in B(G)$, where $B(G)$ is the set of oriented edges of $G$. This shows in fact that $(\cT,\cW)$ has at most $|B(G)|+1$ cone types (the $+1$ comes from the cone at the origin). 

For such a universal cover, we use a finite set of labels $\mathfrak{A} = \{1,\dots,m\}$. We label the cone at the origin by $1$ and the rest by $j\in \{2,\dots,m\}$, according to the type of the colored cone. Note that the origin has a distinct label (even if some cone is isomorphic to the cone at the origin). We then define a matrix $(M_{i,j})_{i,j\in\mathfrak{A}}$, where a vertex with label $i$ has $M_{i,j}$ children of label $j$.

Assuming \textbf{(C1)} holds, we know by \cite[Lemma 3.1]{OW07} that the non-backtracking matrix $\cB$ on $G$ is irreducible. This means that for any directed edges $(x_0,x_1)$ and $(y_0,y_1)$ in $G$, there is a non-backtracking path $(v_0,\dots,v_k)$ such that $(v_0,v_1) = (x_0,x_1)$ and $(v_{k-1},v_k) = (y_0,y_1)$. Since the directed edges of $G$ index the different cones, this means that on $(\cT,W)$, any colored cone $\cT_\cW^{(w|w_-)}$ appears as a descendant of any $\cT_\cW^{(v|v_-)}$, except for the cone at the origin which does not appear as it has a distinct label. Hence, in terms of the matrix $M=(M_{i,j})$, we see that \textbf{(C1)} implies

\medskip

\textbf{(C1')} We have $M_{1,1}=0$. Moreover, for any $k,l\in \{2,\dots,m\}$, there is $n=n(k,l)$ such that $(M^n)_{k,l} \ge 1$.

\medskip

The fact that $M_{1,1}=0$ is by definition of the labels, the rest is implied by \textbf{(C1)}.
\end{rem}

\begin{thm}\label{thm:specone}
Assume that \emph{\textbf{(C1)}} holds and let $(\cT,\cW)=(\widetilde{G},\widetilde{W})$. Then

\begin{enumerate}[\rm (i)]
\item The spectrum of $H_{\cT}$  consists of a finite union of intervals and points~:
\begin{equation}\label{e:specone}
\sigma(H_{\cT}) = (\cup_{r=1}^{\ell} J_r) \sqcup \mathfrak{F} \sqcup \mathfrak{F}' \,,
\end{equation}
where $\sqcup$ denotes the disjoint union. Here $J_r$ are open intervals, $\mathfrak{F}$ is a finite set of points and $\mathfrak{F}'$ is the set of endpoints of the intervals $\{J_r\}$.

The set $\cup_{r=1}^{\ell} J_r$ is never empty.
\item The limit $\zeta_w^{\lambda}(v) := \lim_{\eta \downarrow 0} \zeta_w^{\lambda+\ii\eta}(v)$ exists for $\lambda$ in $J_r$, and satisfies $|\Im \zeta_w^{\lambda}(v)|>0$ for all $(v,w)$.
\item The limit $G^{\lambda+\ii0}(v,w)$ exists for any $v,w\in\cT$ and $\lambda\in J_r$.
\item The spectrum is purely absolutely continuous in closed $I\subset J_r$.
\item If $\mathfrak{F}\neq\emptyset$, then the points in $\mathfrak{F}$ are eigenvalues of infinite multiplicity.
\end{enumerate}
\end{thm}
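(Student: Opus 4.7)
The plan is to leverage the tree-of-finite-cone-type structure of $(\cT, \cW)$ established in Remark~\ref{rem:cones}, together with the irreducibility condition \textbf{(C1')}. Since cones with the same label are isomorphic, each $\zeta_{\tilde y}^\gamma(\tilde x)$ depends only on $\pi(\tilde x, \tilde y) = (x,y) \in B(G)$, so the recursion \eqref{e:rec} collapses to a finite polynomial system in the unknowns $\{\zeta_y^\gamma(x)\}_{(x,y)\in B(G)}$. For $\gamma \in \C^+$ this system has a unique solution in $(\C^-)^{B(G)}$, selected by the Herglotz interpretation \eqref{e:herg}, and depending analytically on $\gamma$.

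For (i)--(iv), I would introduce the set $\mathcal B \subset \R$ of those $\lambda$ for which the boundary limits $\zeta_y^{\lambda+\ii 0}(x)$ exist with strictly negative imaginary part for every $(x,y) \in B(G)$. Its complement in $\R$ is cut out by finitely many algebraic conditions (branch points of the polynomial system and vanishing of boundary imaginary parts), so $\mathcal B$ decomposes into finitely many open intervals $J_1,\dots,J_\ell$; then $\mathfrak{F}'$ is the set of endpoints and $\mathfrak{F}$ the remaining isolated exceptional real points. The key dichotomy is a Perron--Frobenius argument: writing $b_y^x(\gamma) = -\Im \zeta_y^\gamma(x)$ and taking imaginary parts of \eqref{e:rec} gives, at $\gamma = \lambda + \ii 0$, the linear fixed-point relation $b_y^x = |\zeta_y^\lambda(x)|^2 \sum_{z \in \cN_x \setminus \{y\}} b_x^z$ with nonnegative coefficients; the underlying matrix is irreducible by \textbf{(C1')}, so at any boundary point either all $b_y^x$ vanish or all are strictly positive. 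Inside $\mathcal B$ none vanish, which gives (ii). Then (iii) follows from \eqref{e:rec} and \eqref{e:greenmulti}, and (iv) follows because the spectral density $\pi^{-1} \Im G^{\lambda+\ii 0}(v,v)$ is continuous and strictly positive on each closed $I \subset J_r$, yielding pure AC spectrum there. Non-emptiness of $\bigcup_r J_r$ in (i) holds because $H_\cT$ is bounded self-adjoint and thus has nonempty spectrum, while $\mathfrak F \cup \mathfrak F'$ is finite and cannot exhaust it.

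For (v), infinite multiplicity of each $\lambda \in \mathfrak F$ follows from the high symmetry of the universal cover. If $\lambda$ is an $\ell^2$-eigenvalue, the finite polynomial recursion degenerates at $\lambda$, and a linear-algebra/Wronskian argument on the resulting singular linear system produces an eigenfunction supported inside a single cone $\cT^{(v|v_-)}$. By \textbf{(C1')}, every cone type appears infinitely often as a descendant subtree of $\cT$, so one transplants this cone-supported eigenfunction via tree isomorphisms to obtain infinitely many disjointly supported, orthogonal copies, whence infinite multiplicity.

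I expect the main obstacle to be part (ii): rigorously establishing the Perron--Frobenius dichotomy for boundary values and cleanly separating the regular set $\bigcup_r J_r$ from the finite exceptional sets $\mathfrak F$ and $\mathfrak F'$, including proving that the latter are actually finite (this uses that the $\zeta_y^\gamma(x)$ are algebraic functions of $\gamma$, so have finitely many singularities on $\R$). The construction in (v) of cone-localized eigenfunctions at $\lambda \in \mathfrak F$ is also delicate, as it requires identifying the precise algebraic mechanism by which the recursion fails to admit regular boundary values.
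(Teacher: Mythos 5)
Your outline for (i)--(iv) is essentially the route the paper itself takes (it defers the substance to \cite{AS4}): reduce to a finite algebraic system for the $\zeta$'s indexed by $B(G)$, use algebraicity to get finitely many exceptional real points, and use the irreducibility in \textbf{(C1')} via a Perron--Frobenius dichotomy on the imaginary parts to show that on each band either all $|\Im\zeta_y^{\lambda+\ii0}(x)|$ vanish or all are positive. That part of your plan is sound in spirit. However, your justification for the non-emptiness of $\cup_{r}J_r$ is a genuine gap: a finite set \emph{can} be the spectrum of a bounded self-adjoint operator (any projection, for instance), so ``the spectrum is nonempty and a finite set cannot exhaust it'' establishes nothing. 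A real input is needed here. The paper invokes \cite[\S 1.6]{BSV17}: under \textbf{(C1)} the tree $\cT=\widetilde G$ has at least two topological ends, which forces some continuous spectrum (their argument for $\cA_\cT$ carries over to $H_\cT$ after replacing $\lambda f=\cA f$ by $(\lambda-W)f=\cA f$). Alternatively you would have to show directly that $H_\cT$ satisfies no polynomial identity (e.g.\ because $H_\cT^n(v,w)\neq 0$ whenever $d_\cT(v,w)=n$), but some such argument must be supplied.

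The second gap is part (v). Your plan --- extract an eigenfunction supported in a single cone from the ``singular linear system'' at $\lambda\in\mathfrak F$ and then transplant it --- is not carried out, and the existence of cone-supported $\ell^2$ eigenfunctions is exactly the delicate point; you flag it yourself but give no mechanism, so as written this is a proof of nothing. The paper avoids the issue entirely with a soft symmetry argument that you should adopt: a point of $\mathfrak F$ is an isolated spectral point, hence an eigenvalue, so $\mu_v(\{\lambda\})>0$ for some $v$; the deck transformation group $\Gamma$ of the cover $\pi:\cT\to G$ is infinite, acts freely, and commutes with $H_\cT$, so $\mu_{gv}(\{\lambda\})=\mu_v(\{\lambda\})$ for every $g\in\Gamma$, whence $\tr \chi_{(\lambda-\eps,\lambda+\eps)}(H_\cT)\ge\sum_{g\in\Gamma}\mu_{gv}(\{\lambda\})=+\infty$ and the spectral projection cannot have finite rank. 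This gives infinite multiplicity with no need to localize eigenfunctions in cones.
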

\begin{proof}
Claims (i), (ii) are proved in \cite[Section 4]{AS4}. There the case $W\equiv 0$ was considered, but the proof remains the same~: the fact that $(\cT,\cW)$ is the universal cover of $(G,W)$ is responsible for making the Green function of $H_{\cT}$ algebraic. Condition \textbf{(C1)} implies condition \textbf{(C1')} from Remark~\ref{rem:cones}. The role of the latter condition is to link the behaviors of all $\zeta_w^{\lambda}(v)$ (there are only finitely many distinct such $\zeta$, indexed as $\zeta_j^{\gamma}$, $j\in \mathfrak{A}$). The fact that we may take a disjoint union in \eqref{e:specone} is due to the fact that $\mathfrak{F}$ is by construction a set on which some $\zeta_j^{\lambda+\ii 0}$ has a pole. Such a $\lambda$ cannot be in $J_r$ by (ii), so it is either isolated from $J_r$, or an endpoint of $J_r$. 

The fact that $\cup_{r=1}^{\ell} J_r \neq \emptyset$, i.e. that $\sigma(H_{\cT})$ always has some continuous spectrum,  follows in particular from the results of \cite[\S 1.6]{BSV17}, because the tree $\cT=\widetilde{G}$ always has at least two topological ends under assumption \textbf{(C1)}. In that paper, the authors only consider $\cA_{\cT}$, but their proof remains the same for $H_{\cT}$. One simply replaces \cite[eq. (7)]{BSV17} saying that $\lambda f(w)=(\cA f)(w)$ by $(\lambda-W(w))f(w) = (\cA f)(w)$.

For (iii), first note that by \eqref{e:rec} and the fact that $\lambda$ and $W$ are real, we have $|G^{\lambda}(v,v)| = |\frac{1}{W(v)+\sum_{u\sim v}\zeta_v^{\lambda}(u)-\lambda}| \le \frac{1}{|\Im \zeta_v^{\lambda}(u)|} <\infty$ using (ii). We deduce the claim for general $w$ using \eqref{e:greenmulti}.

Item (iv) follows from (iii) and the continuity of $\lambda\mapsto G^{\lambda+\ii0}(v,v)$, which follows from \eqref{e:rec} and the continuity of $\lambda\mapsto \zeta^{\lambda+\ii0}_w(u)$. The latter is proved in \cite{AS4}.

We now prove (v). Since any $\lambda\in\mathfrak{F}$ is an isolated point of the spectrum, it is an eigenvalue. Hence, if $\mu_v(J) = \langle \delta_v,\chi_J(H_{\cT})\delta_v\rangle$ is the spectral measure at $v\in \cT$, we know that $\mu_v(\{\lambda\})>0$ for some $v\in \cT$. By \cite[Theorem 1.6]{Si95}, we have $\lim_{\eta\downarrow 0}(-\ii\eta)G^{\lambda+\ii \eta}(w,w) = \mu_w(\{\lambda\})$ for any $w\in \cT$.

If $\pi:\cT\to G$ is the covering projection, let $\Gamma$ be the group of automorphisms $\{g\}$ of $\cT$ such that $\pi \circ g = \pi$. This group acts freely on $\cT$ and $\cT / \Gamma \cong G$. Clearly, $H_{\cT}$ is invariant under the action of $\Gamma$. It follows that $G^{\gamma}(gv,gw) = G^{\gamma}(v,w)$ for any $g\in \Gamma$. In particular, $\mu_{gv}(J)=\mu_v(J)$ for any $g\in \Gamma$.

Showing that $\lambda$ has infinite multiplicity amounts to proving $\rank \chi_{(\lambda-\eps,\lambda+\eps)}(H_{\cT}) = +\infty$ for all $\eps>0$. If this was not that case, this operator would be of finite rank. In particular it would have a finite trace. But $\tr \chi_{(\lambda-\eps,\lambda+\eps)}(H_{\cT}) = \sum_{w\in\cT} \mu_w (\lambda-\eps,\lambda+\eps) \ge \sum_{g\in \Gamma} \mu_{g v}(\{\lambda\}) = \sum_{g\in \Gamma} \mu_v(\{\lambda\}) = +\infty$. This proves (v).
\end{proof}

The previous theorem shows that $z_{\lambda}:=z_{\lambda}(G)$ is well-defined and strictly positive, see \eqref{e:zlam}. We deduce some simple bounds. By \eqref{e:rec}, $|G^{\gamma}(v,v)| \le \frac{1}{|\Im [W(v) + \sum_{u\sim v}\zeta_v^{\gamma}(u)-\gamma]|}$. Since $\min \deg G \ge 2$, it follows that for $\lambda$ in the bulk spectrum,
\begin{equation}\label{e:zboun1}
|G^{\lambda}(v,v)| \le \frac{1}{2z_{\lambda}} \quad \text{and}\quad |\zeta_w^{\lambda}(v)| \le \frac{1}{z_{\lambda}} \,,
\end{equation}
where we used the second part of \eqref{e:rec} for the other inequality. Similarly, using $|\zeta^{\lambda}|\ge z_{\lambda}$ and \eqref{e:zetainv}, we get
\begin{equation}\label{e:zboun2}
\frac{1}{|G^{\lambda}(v,v)|} \le 2 z_{\lambda}^{-1} \quad\text{and}\quad \frac{|\Im \zeta_w^{\lambda}(v)|}{|\zeta_w^{\lambda}(v)|^2} \le z_{\lambda}^{-1} \,,
\end{equation}
where we use $\frac{|\Im \zeta^{\lambda}|}{|\zeta^{\lambda}|^2} \le |\frac{1}{\zeta^{\lambda}}|$ in the second part.

\medskip

We now show how we link the behavior of the eigenfunctions of the finite graph to that of the Green function of the universal cover. This can be seen as an alternative to \eqref{e:borgei}. 

Henceforth, $\cT$ will always denote the universal cover $(\widetilde{G},\widetilde{W})$ of $(G,W)$.

\begin{prp}\label{prp:retour}
Let $H_G\psi_{\lambda} = \lambda \psi_{\lambda}$. Suppose $\lambda\in\R \setminus (\mathfrak{F}\cup \mathfrak{F}')$. Denote $G^{\lambda}(v,w) = \lim_{\eta\downarrow 0}G^{\lambda+\ii\eta}_{\cT}(v,w)$. Given $x_0\in G$, fix any $x_1\sim x_0$. Then for any $r,k\ge 1$,
\begin{multline}\label{e:outrep}
\psi_{\lambda}(x_0) = \sum_{(x_2;x_{r+1})} \left[G^{\lambda}(\tilde{x}_0,\tilde{x}_{r+1})\psi_{\lambda}(x_r) - G^{\lambda}(\tilde{x}_0,\tilde{x}_r)\psi_{\lambda}(x_{r+1}) \right] \\
+ \sum_{(x_{-k};x_{-1})}\left[G^{\lambda}(\tilde{x}_0,\tilde{x}_{-k})\psi_{\lambda}(x_{-k+1}) - G^{\lambda}(\tilde{x}_0,\tilde{x}_{-k+1})\psi_{\lambda}(x_{-k})\right] ,
\end{multline}
where the sums run over all paths $(x_2,\dots,x_{r+1})$ (resp. $(x_{-k},\dots,x_{-1})$) such that $x_2\in \cN_{x_1}\setminus \{x_0\}$ (resp $x_{-1}\in\cN_{x_0}\setminus\{x_1\}$). Here $\tilde{x}_m\in \cT$ is a lift of $x_m\in G$ with $d_{\cT}(\tilde{x}_0,\tilde{x}_m)=m$.

If $\lambda$ is in the bulk $\cup_{r=1}^{\ell}J_r$, we also have
\begin{multline}\label{e:bulkrep}
\psi_{\lambda}(x_0) = \frac{1}{|\Im\zeta_{x_0}^{\lambda}(x_1)|}\sum_{(x_2;x_{r+1})}\big[\Im\left(\zeta_{x_0}^{\lambda}(x_1)\cdots\zeta_{x_{r-1}}^{\lambda}(x_r)\right)\psi_{\lambda}(x_{r+1}) \\
- \Im\left(\zeta_{x_0}^{\lambda}(x_1)\cdots\zeta_{x_r}^{\lambda}(x_{r+1})\right)\psi_{\lambda}(x_r)\big]\,.
\end{multline}
\end{prp}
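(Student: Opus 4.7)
The plan is to lift $\psi_\lambda$ to the universal cover $\cT=\widetilde G$ and apply a discrete Green's identity on a carefully chosen finite subtree. Setting $\phi(\tilde v):=\psi_\lambda(\pi\tilde v)$, where $\pi:\cT\to G$ is the covering projection, the local-bijection property of $\pi$ on neighborhoods together with $\widetilde W=W\circ\pi$ yields $H_{\cT}\phi=\lambda\phi$ pointwise on $\cT$ (although $\phi\notin\ell^2$). A direct computation shows that for any finite $T\subset\cT$ and any $f,g:\cT\to\C$,
\[
\sum_{\tilde v\in T}\bigl[(H_{\cT}f)(\tilde v)g(\tilde v)-f(\tilde v)(H_{\cT}g)(\tilde v)\bigr]=\sum_{\substack{\tilde v\in T,\ \tilde w\notin T\\ \tilde w\sim\tilde v}}\bigl[f(\tilde w)g(\tilde v)-f(\tilde v)g(\tilde w)\bigr],
\]
since the potential contributions cancel and each edge interior to $T$ appears twice with opposite signs.

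To prove \eqref{e:outrep}, I take $\lambda\in\R\setminus(\mathfrak{F}\cup\mathfrak{F}')$; Theorem~\ref{thm:specone} then supplies the boundary value $G^\lambda(\tilde v,\tilde w):=\lim_{\eta\downarrow 0}G^{\lambda+\ii\eta}(\tilde v,\tilde w)$ with $(H_{\cT}-\lambda)G^\lambda(\tilde x_0,\cdot)=\delta_{\tilde x_0}$ holding pointwise in the limit. For $T$, I choose the subtree formed by $\tilde x_0$, all non-backtracking continuations $(\tilde x_0,\tilde x_1,\tilde x_2,\dots,\tilde x_r)$ of length $r$ passing through $\tilde x_1$, and all non-backtracking continuations $(\tilde x_0,\tilde x_{-1},\dots,\tilde x_{-k+1})$ of length $k-1$ through neighbors of $\tilde x_0$ other than $\tilde x_1$. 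With $f=\phi$ and $g=G^\lambda(\tilde x_0,\cdot)$, the left-hand side telescopes to $-\phi(\tilde x_0)=-\psi_\lambda(x_0)$, while the boundary splits into the forward edges $(\tilde x_r,\tilde x_{r+1})$ and the backward edges $(\tilde x_{-k+1},\tilde x_{-k})$; rearranging reproduces \eqref{e:outrep}.

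For \eqref{e:bulkrep}, I switch to a one-sided subtree: let $T$ contain $\tilde x_1$ and all its non-backtracking descendants of depth at most $r-1$ in $\cT^{(\tilde x_1|\tilde x_0)}$, so that $\tilde x_0\notin T$, and define $g(\tilde v):=G^{\lambda+\ii 0}_{\cT^{(\tilde x_1|\tilde x_0)}}(\tilde x_1,\tilde v)$ for $\tilde v\in\cT^{(\tilde x_1|\tilde x_0)}$ and $g(\tilde v):=0$ elsewhere. Because $g(\tilde x_0)=0$, the full operator $H_{\cT}$ still satisfies $(H_{\cT}-\lambda)g=\delta_{\tilde x_1}$ on $T$. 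Green's identity then gives $-\psi_\lambda(x_1)$ on the left; the boundary consists of the single internal edge $(\tilde x_1,\tilde x_0)$, contributing $-\psi_\lambda(x_0)\zeta_{x_0}^\lambda(x_1)$ via $G^\lambda_{\cT^{(\tilde x_1|\tilde x_0)}}(\tilde x_1,\tilde x_1)=-\zeta_{x_0}^\lambda(x_1)$, together with the forward edges $(\tilde x_r,\tilde x_{r+1})$. Using \eqref{e:greenmulti} inside the subtree yields $G^\lambda_{\cT^{(\tilde x_1|\tilde x_0)}}(\tilde x_1,\tilde x_m)=-P_m$ with $P_m:=\zeta_{x_0}^\lambda(x_1)\cdots\zeta_{x_{m-1}}^\lambda(x_m)$, and hence
\[
\psi_\lambda(x_0)\,\zeta_{x_0}^\lambda(x_1)=\psi_\lambda(x_1)+\sum_{(x_2;x_{r+1})}\bigl[P_{r+1}\psi_\lambda(x_r)-P_r\psi_\lambda(x_{r+1})\bigr].
\]
Running the same argument with $\lambda-\ii 0$ produces the complex conjugate identity; subtracting, using that $\psi_\lambda$ is real-valued and $\Im\zeta_{x_0}^\lambda(x_1)=-|\Im\zeta_{x_0}^\lambda(x_1)|$ by \eqref{e:herg}, and dividing through, isolates \eqref{e:bulkrep}.

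The main obstacle is identifying the correct subtree-plus-test-function combination for \eqref{e:bulkrep}: a naive application with $g=G^\lambda(\tilde x_0,\cdot)$ on a one-sided subtree would fail to isolate $\psi_\lambda(x_0)$, and the asymmetric weight $1/|\Im\zeta_{x_0}^\lambda(x_1)|$ does not arise from any single application of Green's identity. The trick is twofold: first, base $g$ on the \emph{subtree} Green function $G^\lambda_{\cT^{(\tilde x_1|\tilde x_0)}}$, so that the internal boundary edge $(\tilde x_1,\tilde x_0)$ produces the factor $-\zeta_{x_0}^\lambda(x_1)$ coupled to $\psi_\lambda(x_0)$; second, subtract the $\lambda\pm\ii 0$ versions to annihilate the real parts of the boundary products and generate the imaginary weight. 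Existence of the limiting Green functions and of the factorization \eqref{e:greenmulti} at the spectral edge is exactly what Theorem~\ref{thm:specone} delivers, which is why the exceptional set $\mathfrak{F}\cup\mathfrak{F}'$ must be excluded from the hypotheses.
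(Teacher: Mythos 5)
Your proof is correct and takes a genuinely different route from the paper's. The paper introduces non-backtracking eigenfunctions $f_{\gamma},f_{\gamma}^{\ast}$ on the directed edges of the \emph{finite} graph, uses the algebraic identity $\psi_{\lambda}(x_0)=-G^{\gamma}(\tilde{x}_0,\tilde{x}_0)[f_{\gamma}+\zeta_{x_1}^{\gamma}(x_0)^{-1}f_{\gamma}^{\ast}]$ together with the eigenvalue-type relations $(\zeta^{\gamma}\cB)f_{\gamma}=f_{\gamma}-\ii\eta\,\zeta^{\gamma}\tau_+\psi_{\lambda}$ and $(\iota\zeta^{\gamma}\cB^{\ast})f_{\gamma}^{\ast}=f_{\gamma}^{\ast}-\ii\eta\,\iota\zeta^{\gamma}\tau_-\psi_{\lambda}$, iterates, plugs back in, and lets $\eta\downarrow0$ (the $\ii\eta$ error sums vanish). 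For \eqref{e:bulkrep} the paper writes $\psi_{\lambda}(x_0)=\frac{f_{\lambda}-g_{\lambda}}{2\ii|\Im\zeta^{\lambda}_{x_0}(x_1)|}$ and iterates $\cB f_{\lambda}=\zeta^{-1}f_{\lambda}$, $\cB g_{\lambda}=\bar{\zeta}^{-1}g_{\lambda}$. You instead lift $\psi_{\lambda}$ to the cover, apply a discrete Green's (summation-by-parts) identity on a finite subtree $T$ against either the boundary Green function $G^{\lambda}(\tilde{x}_0,\cdot)$ (two-sided $T$, giving \eqref{e:outrep}) or the subtree Green function $G^{\lambda}_{\cT^{(\tilde{x}_1|\tilde{x}_0)}}(\tilde{x}_1,\cdot)$ extended by zero (one-sided $T$, giving an identity whose imaginary part is \eqref{e:bulkrep}). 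Both methods exploit the same underlying ingredients from Theorem~\ref{thm:specone} (existence and continuity of boundary values of $\zeta$ and $G$), and the combinatorics agree because $\cT$ is a tree so boundary edges of your $T$ are parametrized precisely by the non-backtracking paths in the statement. What your approach buys: the derivation of \eqref{e:outrep} becomes transparent (a single Green's identity and the telescoping $\delta_{\tilde{x}_0}$), and the origin of the asymmetric weight $1/|\Im\zeta^{\lambda}_{x_0}(x_1)|$ in \eqref{e:bulkrep} is clearly traced to the internal boundary edge $(\tilde{x}_1,\tilde{x}_0)$ and the factor $G^{\lambda}_{\cT^{(\tilde{x}_1|\tilde{x}_0)}}(\tilde{x}_1,\tilde{x}_1)=-\zeta^{\lambda}_{x_0}(x_1)$. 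What the paper's approach buys: the operators $(\zeta^{\lambda}\cB)^r$ and $f_{\lambda},g_{\lambda}$ introduced in the proof are exactly the objects reused in Sections~3 and~5, so no re-derivation is needed there. Two small remarks: (i) to apply Green's identity rigorously you should first work at $\gamma=\lambda+\ii\eta$, observe that the mismatch $(H_{\cT}-\lambda)g_{\gamma}=\delta-\ii\eta\,g_{\gamma}$ contributes $-\ii\eta\sum_{v\in T}\phi(v)g_{\gamma}(v)$, which vanishes as $\eta\downarrow0$ because $T$ is finite and $g_{\gamma}$ has a limit — you gesture at this but it should be said; (ii) the invocation of ``$\psi_{\lambda}$ real-valued'' in the subtraction step for \eqref{e:bulkrep} is unnecessary: only the test function $g$ is conjugated when passing from $\lambda+\ii0$ to $\lambda-\ii0$, while $\phi$ stays fixed, so the subtraction isolates $2\ii\Im$ of the Green-function factors regardless of the phase of $\psi_{\lambda}$.
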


In the above representation, it may happen that $x_r=x_0$ (if there are cycles near $x_0$), but the corresponding point $\tilde{x}_r$ which appears in the Green function will always satisfy $d_{\cT}(\tilde{x}_0,\tilde{x}_r)=r$. The path in $\cT$ which links $\tilde{x}_0$ to $\tilde{x}_r$ is precisely $(\tilde{x}_0,\tilde{x}_1,\dots,\tilde{x}_r)$.

Note that we obtain directly the Green function on the tree, in contrast to \eqref{e:borgei} where one obtains the Green function on the finite graph and then approximates it to a treelike graph. The proof will use the idea of non-backtracking eigenfunctions first introduced in \cite{A} and further developed in \cite{AS2}, along with a simple observation \eqref{e:backnow}. The argument would be a bit simpler if we assumed moreover that $G^{\lambda}(\tilde{x}_0,\tilde{x}_0) \neq 0$. In this case, one can work directly with $\lambda\in\R$ instead of considering $\gamma=\lambda+\ii\eta$ as we do below.
\begin{proof}
Fix $\eta>0$, denote $\gamma=\lambda+\ii\eta$ and define
\begin{equation}\label{e:fftoile}
f_{\gamma}(x_0,x_1) = \psi_{\lambda}(x_1)-\zeta_{x_0}^{\gamma}(x_1)\psi_{\lambda}(x_0) \quad \text{and} \quad f_{\gamma}^{\ast}(x_0,x_1)=\psi_{\lambda}(x_0)-\zeta_{x_1}^{\gamma}(x_0)\psi_{\lambda}(x_1) \,.
\end{equation}
Then
\[
f_{\gamma}(x_0,x_1) + \frac{1}{\zeta_{x_1}^{\gamma}(x_0)} f_{\gamma}^{\ast}(x_0,x_1) = \left[ \frac{1}{\zeta_{x_1}^{\gamma}(x_0)} - \zeta_{x_0}^{\gamma}(x_1) \right] \psi_{\lambda}(x_0) \,.
\]
Hence, using \eqref{e:zetainv},
\begin{equation}\label{e:backnow}
\psi_{\lambda}(x_0) = -G^{\gamma}(\tilde{x}_0,\tilde{x}_0) \left[f_{\gamma}(x_0,x_1) + \frac{1}{\zeta_{x_1}^{\gamma}(x_0)}f_{\gamma}^{\ast}(x_0,x_1)\right] .
\end{equation}
Consider the non-backtracking operator $\cB$ on $\ell^2(B)$ defined by
\begin{equation}\label{e:nonbackoper}
(\cB f)(b) = \sum_{b^+\in \cN_b^+} f(b^+) \,,
\end{equation}
where $\cN_b^+$ is the set of outgoing edges from $b$, i.e. the set of $b'$ such that $t(b) = o(b')$ and $b'\neq \iota b$, where $\iota b$ is the edge reversal of $b$. Using that $H\psi_{\lambda} = \lambda \psi_{\lambda}$ along with the recursive identity \eqref{e:rec}, we find that
\begin{align*}
(\cB f_{\gamma})(x_0,x_1) &= \left[\lambda\psi_{\lambda}(x_1)-W(x_1)\psi_{\lambda}(x_1)-\psi_{\lambda}(x_0)\right] - \psi_{\lambda}(x_1)\left[\gamma-W(x_1)-\frac{1}{\zeta_{x_0}^{\gamma}(x_1)}\right]\\
& = \frac{1}{\zeta_{x_0}^{\gamma}(x_1)}f_{\gamma}(x_0,x_1)-\ii\eta\,\psi_{\lambda}(x_1) \,.
\end{align*}
Hence, $\zeta^{\gamma}\cB f_{\gamma} = f_{\gamma} - \ii\eta \zeta^{\gamma}\tau_+\psi$, where $(\tau_+\psi)(x_0,x_1)=\psi(x_1)$ and $\zeta^{\gamma}$ is the multiplication operator by $\zeta^{\gamma}(x_0,x_1)=\zeta_{x_0}^{\gamma}(x_1)$. By induction, we get
\begin{equation}\label{e:indu1}
(\zeta^{\gamma}\cB)^rf_{\gamma} = f_{\gamma}-\ii\eta\sum_{t=1}^r(\zeta^{\gamma}\cB)^{t-1}\zeta^{\gamma}\tau_+\psi_{\lambda} \,.
\end{equation}
Similarly, if $\iota \zeta^{\gamma}$ is the multiplication operator by $\iota\zeta^{\gamma}(x_0,x_1)=\zeta_{x_1}^{\gamma}(x_0)$, we get
\begin{equation}\label{e:indu2}
(\iota\zeta^{\gamma}\cB^{\ast})^kf_{\gamma}^{\ast} = f_{\gamma}^{\ast} - \ii\eta\sum_{t=1}^k(\iota\zeta^{\gamma}\cB^{\ast})^{t-1}\iota\zeta^{\gamma}\tau_-\psi_{\lambda} \,,
\end{equation}
where $(\tau_-\psi)(x_0,x_1)=\psi(x_0)$. On the other hand, for any $f\in\ell^2(B)$,
\[
\left[\left(\zeta^{\gamma}\cB\right)^rf\right](x_0,x_1) = \sum_{(x_2;x_{r+1})} \zeta_{x_0}^{\gamma}(x_1)\cdots\zeta_{x_{r-1}}^{\gamma}(x_r) f(x_r,x_{r+1}) \,,
\]
\[
\left[\left(\iota \zeta^{\gamma}\cB^{\ast}\right)^kf\right](x_0,x_1) = \sum_{(x_{-k};x_{-1})} \zeta_{x_1}^{\gamma}(x_0)\cdots\zeta_{x_{-k+2}}^{\gamma}(x_{-k+1}) f(x_{-k},x_{-k+1}) \,.
\]
Hence, inserting \eqref{e:indu1} and \eqref{e:indu2} into \eqref{e:backnow} and using \eqref{e:greenmulti}, we get
\begin{multline*}
\psi_{\lambda}(x_0)=-\bigg[\sum_{(x_2;x_{r+1})} G^{\gamma}(\tilde{x}_0,\tilde{x}_r)f_{\gamma}(x_r,x_{r+1}) + \sum_{(x_{-k};x_{-1})} G^{\gamma}(\tilde{x}_0,\tilde{x}_{-k+1})f_{\gamma}^{\ast}(x_{-k},x_{-k+1}) \\
+\ii\eta\sum_{t=1}^r \sum_{(x_2;x_t)} G^{\gamma}(\tilde{x}_0,\tilde{x}_t)\psi_{\lambda}(x_t) +\ii\eta\sum_{t=1}^k\sum_{(x_{-t+1};x_{-1})} G^{\gamma}(\tilde{x}_0,\tilde{x}_{-t+1})\psi_{\lambda}(x_{-t+1}) \bigg] \,.
\end{multline*}
Finally, we use \eqref{e:fftoile} to expand the first two sums and take $\eta \downarrow 0$. Since $\lambda\in\R\setminus (\mathfrak{F}\cup\mathfrak{F}')$, we know from Theorem~\ref{thm:specone} that all $G^{\lambda}(v,w)$ exist. In particular, the last two error sums vanish, proving the first representation.

For the second one, let
\begin{equation}\label{e:fglambda}
f_{\lambda}(x_0,x_1) = \psi_{\lambda}(x_1) - \zeta_{x_0}^{\lambda}(x_1) \psi_{\lambda}(x_0) \quad \text{and} \quad g_{\lambda}(x_0,x_1) = \psi_{\lambda}(x_1) - \overline{\zeta_{x_0}^{\lambda}(x_1)}\psi_{\lambda}(x_0) \, .
\end{equation}
Then for $\lambda\in J_r$,
\begin{equation}\label{e:psidif}
\psi_{\lambda}(x_0) = \frac{f_{\lambda}(x_0,x_1)-g_{\lambda}(x_0,x_1)}{2\ii\,|\Im \zeta^{\lambda}_{x_0}(x_1)|} \,.
\end{equation}
On the other hand, $\cB f_{\lambda} = \frac{1}{\zeta^{\lambda}}f_{\lambda}$, $\cB g_{\lambda} = \frac{1}{\overline{\zeta^{\lambda}}} g_{\lambda}$. Hence,
\[
\psi_{\lambda}(x_0) = \frac{[(\zeta^{\lambda}\cB)^rf_{\lambda}](x_0,x_1) - [(\overline{\zeta^{\lambda}}\cB)^rg_{\lambda}](x_0,x_1)}{2\ii\,|\Im\zeta_{x_0}^{\lambda}(x_1)|} \,.
\]
Expanding this gives the second representation.
\end{proof}

\section{The proof for supremum norms}

We begin the section by analyzing balls $B_G(x,n)$ with $n\le \ell_G$.

\begin{rem}\label{rem:tangle}
If $n\le \rho_G$, there is a single path from $x$ to $y\in B_G(x,n)$. If $\rho_G<n\le \ell_G$, there can be more. If $d_G(x,y)=k\le \ell_G$, because there is at most one cycle in $B_G(x,n)$, it is easy to see that there are at most two paths of length $k$ from $x$ to $y$.

Paths of length $k'>k$ can also reach $y$ by winding around some cycle $C\subset B_G(x,n)$ before terminating at $y$. The path may loop several times on $C$, but once it has exited the cycle, it cannot go back to it. In fact, after leaving $C$, the path has a unique road to reach $y$. If it were to come back to $C$, it would have to backtrack on this road, which we exclude. If we fix the length $k'$, there can be at most two such paths~: those traversing the cycle from either direction.

If $y$ is on a cycle, there can also be a path of length $k''>k$ which does not wind, but traverses the cycle in opposite direction (this situation will not arise in our proof later).
\end{rem}

\subsection{Within the bulk}\label{sec:bulk}

Recall the non-backtracking operator $\cB$ defined in \eqref{e:nonbackoper}. Given $\lambda\in J_r$, we now consider the operators on $\ell^2(B)$ defined by
\[
\cM_{n,\lambda} = \frac{1}{n}\sum_{r=1}^n\frac{1}{|\Im \zeta^{\lambda}|^{1/2}}\left(\zeta^{\lambda}\cB\right)^r|\Im\zeta^{\lambda}|^{1/2}
\]
and $\overline{\cM_{n,\lambda}} = \frac{1}{n}\sum_{r=1}^n\frac{1}{|\Im \zeta^{\lambda}|^{1/2}}(\overline{\zeta^{\lambda}}\cB)^r|\Im\zeta^{\lambda}|^{1/2}$. These will replace the ``spectral cluster operators'' $\frac{1}{N}W_{N,\alpha}$ considered in \cite{BL17}, see also \cite{HT15}. As in these references, we aim to take $n$ of logarithmic size, namely $\ell_G$. There are notable differences here however. In \cite{BL17,HT15}, the spectral cluster is essentially a function of the Laplacian. Here $\cM_{n,\lambda}$ is not normal, and in fact defined on a different Hilbert space~: $\ell^2(B)$ instead of $\ell^2(V)$. In case the graph is regular and $W\equiv 0$, one can use the explicit basis $(h_{\lambda_j})$ of $\ell^2(B)$ given in \cite[Section 7]{ALM} to see that $\cM_{n,\lambda_j} h_{\lambda_j}=h_{\lambda_j}$ and $\cM_{n,\lambda_j} h_{\lambda_k} = O(\frac{1}{n}) h_{\lambda_k}$ on the other basis elements, so that we have indeed a kind of ``projection'' onto the eigenfunction $h_{\lambda_j}$ of $\cB$. It is not clear if this remains true for general graphs. But the $\cM_{n,\lambda_j}$ do preserve $h_{\lambda_j}$ in full generality. More precisely, recall the functions $f_{\lambda}$, $g_{\lambda}$ in \eqref{e:fglambda}. As $(\zeta^{\lambda}\cB)^r f_{\lambda} = f_{\lambda}$ and $(\overline{\zeta^{\lambda}}\cB)^r g_{\lambda}=g_{\lambda}$ we get $\cM_{n,\lambda} \frac{f_{\lambda}}{|\Im \zeta^{\lambda}|^{1/2}} = \frac{f_{\lambda}}{|\Im\zeta^{\lambda}|^{1/2}}$ and $\overline{\cM_{n,\lambda}} \frac{g_{\lambda}}{|\Im \zeta^{\lambda}|^{1/2}} = \frac{g_{\lambda}}{|\Im\zeta^{\lambda}|^{1/2}}$. Recalling \eqref{e:psidif}, we thus get
\begin{equation}\label{e:rebulk}
\psi_{\lambda}(x_0) = \frac{1}{2\ii\,|\Im\zeta^{\lambda}_{x_0}(x_1)|^{1/2}}\left[ \left(\cM_{n,\lambda} \frac{f_{\lambda}}{|\Im \zeta^{\lambda}|^{1/2}}\right)(x_0,x_1) -  \left(\overline{\cM_{n,\lambda}} \frac{g_{\lambda}}{|\Im \zeta^{\lambda}|^{1/2}}\right)(x_0,x_1) \right]
\end{equation}
for any $x_1\sim x_0$.

We estimate the first sum, the other is similar. Denote $b_j=(x_{j-1},x_j)$. Write
\[
\left(\cM_{n,\lambda} \frac{f_{\lambda}}{|\Im \zeta^{\lambda}|^{1/2}}\right)(b_1) = \sum_{b'\in B} \cM_{n,\lambda}(b_1,b') \frac{f_{\lambda}}{|\Im \zeta^{\lambda}|^{1/2}}(b') \,.
\]

To describe the kernel $\cM_{n,\lambda}(b_1,b')$ above, let $\cB^r b_1$ be the set of edges which can be reached by a non-backtracking path of length $r$ from $b_1$. For example $\cB b_1 = \cN_{b_1}^+$ is the set of edges $(x_1,x_2)$ as $x_2$ varies over $\cN_{x_1}\setminus \{x_0\}$. Then we may replace sums over $(x_2;x_{r+1})$ by sums over $b_{r+1}\in \cB^r b_1$.

Suppose first the injectivity radius at $x_0$ satisfies $\rho_G(x_0)\ge n$. If $b_{r+1}\in \cB^rb_1$, there is a single path $(b_1,b_2,\dots,b_{r+1})$ form $b_1$ to $b_{r+1}$.

Let $\cK(b_1,b_2) = \frac{\zeta(b_1)|\Im\zeta(b_2)|^{1/2}}{|\Im\zeta(b_1)|^{1/2}}$ for $b_2\in \cB b_1$, $\cK(b_1,b_3) = \frac{\zeta(b_1)\zeta(b_2)|\Im\zeta(b_3)|^{1/2}}{|\Im\zeta(b_1)|^{1/2}}$ for $b_3\in\cB^2 b_1$. More generally, for $r\le n$ we let $\cK(b_1,b_{r+1}) = \frac{\zeta(b_1)\cdots\zeta(b_r)|\Im\zeta(b_{r+1})|^{1/2}}{|\Im\zeta(b_1)|^{1/2}}$ for $b_{r+1}\in\cB^r b_1$, and $\cK(b_1,b')=0$ otherwise. Then we have precisely $\cM_{n,\lambda}(b_1,b') = \frac{1}{n}\cK(b_1,b')$. Now by the Cauchy-Schwarz inequality,
\begin{equation}\label{e:cauchy-schwarz}
\left|\left(\cM_{n,\lambda} \frac{f_{\lambda}}{|\Im \zeta^{\lambda}|^{1/2}}\right)(b_1)\right| \le \left(\sum_{b'\in B} \left|\cM_{n,\lambda}(b_1,b')\right|^2\right)^{1/2} \left\|\frac{f_{\lambda}}{|\Im \zeta^{\lambda}|^{1/2}}\right\|_2 \,.
\end{equation}
On the other hand,
\begin{align*}
\sum_{b'\in B} \left|\cM_{n,\lambda}(b_1,b')\right|^2 & = \frac{1}{n^2}\sum_{r=1}^n\sum_{b_{r+1}\in \cB^rb_1} \frac{|\zeta^{\lambda}(b_1)\cdots\zeta^{\lambda}(b_r)|^2|\Im\zeta^{\lambda}(b_{r+1})|}{|\Im \zeta^{\lambda}(b_1)|} \\
& = \frac{1}{n^2}\sum_{r=1}^n\sum_{(x_2;x_{r+1})} \frac{|\zeta^{\lambda}_{x_0}(x_1)\cdots\zeta^{\lambda}_{x_{r-1}}(x_r)|^2|\Im\zeta^{\lambda}_{x_r}(x_{r+1})|}{|\Im \zeta^{\lambda}_{x_0}(x_1)|} \,.
\end{align*}
It follows from \eqref{e:herg} and \eqref{e:rec} that
\begin{equation}\label{e:magicim}
\sum_{x_{k+1}\in \cN_{x_k}\setminus\{x_{k-1}\}} |\Im \zeta^{\lambda}_{x_k}(x_{k+1})| = \frac{|\Im \zeta^{\lambda}_{x_{k-1}}(x_k)|}{|\zeta^{\lambda}_{x_{k-1}}(x_k)|^2} \,.
\end{equation}
Using this identity repeatedly, we deduce that
\begin{equation}\label{e:recurpath}
\sum_{(x_2;x_{r+1})} |\zeta^{\lambda}_{x_0}(x_1)\cdots\zeta^{\lambda}_{x_{r-1}}(x_r)|^2|\Im\zeta^{\lambda}_{x_r}(x_{r+1})| = |\Im\zeta_{x_0}^{\lambda}(x_1)| \,.
\end{equation}
Hence, $\sum_{b'\in B} |\cM_{n,\lambda}(b_1,b') |^2 = \frac{1 }{n}$ (assuming $n\le \rho_G(x_0)$).

\medskip

Now assume more generally that $n \le \ell_G$ and suppose $B_G(x_0,n)$ contains a cycle $C=(u_0,\dots,u_m)$. If $x_0\notin C$, then there is a unique path $\mathfrak{p}=(x_0,y_1,\dots,y_k,u_i)$ from $x_0$ to a point $u_i\in C$, such that $\mathfrak{p}\subset B_G(x_0,n)$ and $y_j\notin C$ for all $j$. Indeed, as $n\le \ell_G$, we know $C$ is the only cycle in $B_G(x_0,n)$. If there were two paths $(x_0,y_1,\dots,y_k,u_i)$ and $(x_0,y_1',\dots,y_r',u_j)$ from $x_0$ to points $u_i,u_j\in C$, we would get an additional cycle besides $C$ in the walk $(x_0,y_1,\dots,y_k,u_i,u_{i-1},\dots,u_j,y_r',y_{r-1}',\dots,x_0)$, which is forbidden\footnote{If the two paths only intersect at $x_0$, this walk is a cycle. In general, if $u_i\neq u_j$, let $y_s$ be the last vertex with $y_{s-1}=y_{s-1}'$. Then a cycle is formed by $(y_{s-1},y_s,\dots,y_k,u_i,u_{i-1},\dots,u_j,y_r',y_{r-1}',\dots,y_s',y_{s-1})$. If $u_i=u_j$, it is meant to consider $(x_0,y_1,\dots,y_k,u_i,y_r',y_{r-1}',\dots,x_0)$. In this case, let $y_s$ be the first vertex with $y_s\neq y_s'$ and $y_t$ the first vertex with $t>s$ such that $y_t=y_t'$ (if $y_t\neq y_t'$ for all $t>s$, let $y_t=y_t':=u_i$). Then a cycle is formed by $(y_{s-1},y_s,\dots,y_t,y_{t-1}',\dots,y_s',y_{s-1})$. In all cases, the cycles contain vertices $x_0$ or $y_m \notin C$ and are thus distinct from $C$.}.

Now the nice property is that \eqref{e:rebulk} is valid for any $x_1\sim x_0$. If $(x_0,y_1,\dots,y_k,u_i)$ is the unique path from $x_0$ to $C$, we simply choose $x_1\neq y_1$. Then we are sure that the paths $(x_2;x_{r+1})$ outgoing from $(x_0,x_1)$ will never meet the cycle $C$. Hence, in this case the values of $\cM_{n,\lambda}(b_1,b')$ are the same as the case $\rho_G(x_0)\ge n$ and we get again $\sum_{b'\in B} |\cM_{n,\lambda}(b_1,b') |^2 = \frac{1 }{n}$.

Now suppose $x_0\in C$. If $\deg(x_0)\ge 3$, we use the same idea~: say $x_0=u_0$. Then we choose $x_1 \sim x_0$ to be a neighbor not in $C$; i.e. $x_1\neq u_1,u_m$. Then again we are sure the paths $(x_2;x_{r+1})$ outgoing from $(x_0,x_1)$ will never meet the cycle, otherwise we would get at least two cycles in $B_G(x_0,n)$. So we get again $\sum_{b'\in B} |\cM_{n,\lambda}(b_1,b') |^2 = \frac{1 }{n}$.

\medskip

The case $x_0\in C$ and $\deg(x_0)=2$ is more subtle. Here we can no longer avoid the cycle $C=(u_0,\dots,u_m)$. Say $x_0=u_0$ and let $x_1=u_1$, so that $b_1=(u_0,u_1)$. Consider $b_{r+1}\in \cB^r b_1$, say $b_{r+1}\notin C$. While in the previous cases, $\frac{f_{\lambda}}{|\Im \zeta^{\lambda}|^{1/2}}(b_{r+1})$ was simply multiplied by $\frac{\zeta(b_1)\cdots\zeta(b_r)|\Im \zeta(b_{r+1})|^{1/2}}{n\,|\Im \zeta(b_1)|^{1/2}}$, here elements from $\cB^s b_1$, $s>r$ can also reach the same point $b_{r+1}$ and add a contribution to $\frac{f_{\lambda}}{|\Im \zeta^{\lambda}|^{1/2}}(b_{r+1})$. This occurs precisely when a long path first makes a few loops around $C$, then leaves the cycle. As mentioned in Remark~\ref{rem:tangle}, once it left $C$, it cannot return to it.

Similarly, any $b_{r+1}\in C$ can be reached through many paths~: the shortest one of length $r$, then the longer ones of length $s=r+mk \le n$, which first make $k$ loops around $C$.

From these considerations, we see that for $e_{r+1}\in \cB^r b_1$, $e_{r+1}=(u_r,u_{r+1})$,
\[
\cM_{n,\lambda}(b_1,e_{r+1}) = \frac{1}{n}\left(\sum_{j=0}^{k_r} \left(\zeta_{u_0}^{\lambda}(u_1)\cdots\zeta_{u_m}^{\lambda}(u_0)\right)^j\right)\frac{\zeta_{u_0}^{\lambda}(u_1)\cdots\zeta_{u_{r-1}}^{\lambda}(u_r)|\Im\zeta_{u_r}^{\lambda}(u_{r+1})|^{1/2}}{|\Im \zeta_{u_0}^{\lambda}(u_1)|^{1/2}} \,,
\]
with $\cM_{n,\lambda}(b_1,b_1) = \frac{1}{n}\sum_{j=1}^{k_0} (\zeta_{u_0}^{\lambda}(u_1)\cdots\zeta_{u_m}^{\lambda}(u_0))^j$. If $b_{r+1}\in \cB^rb_1$, $b_{r+1}\notin C$, let $u_i$ be the unique vertex in $C$ of degree $\ge 3$ which is closest to $o(b_{r+1})$. Then
\begin{multline}\label{e:outbr}
\cM_{n,\lambda}(b_1,b_{r+1}) = \frac{1}{n}\left(\sum_{j=0}^{k_r} \left(\zeta_{u_0}^{\lambda}(u_1)\cdots\zeta_{u_m}^{\lambda}(u_0)\right)^j\right)\\
\cdot \frac{\zeta_{u_0}^{\lambda}(u_1)\cdots\zeta_{u_{i-1}}^{\lambda}(u_i)\zeta_{u_i}^{\lambda}(v_{i+1})\cdots\zeta_{v_{r-1}}^{\lambda}(v_r)|\Im \zeta_{v_r}^{\lambda}(v_{r+1})|^{1/2}}{|\Im \zeta_{u_0}^{\lambda}(u_1)|^{1/2}} \,,
\end{multline}
where $b_{r+1}=(v_r,v_{r+1})$ and $(u_i,v_{i+1},\dots,v_{r+1})$ is the unique path from $u_i$ to $v_{r+1}$.

To handle the above expressions, we must first control the sums over cycles.

\begin{prp}\label{prp:zetacyc}
Let $G$ be a connected graph of minimal degree $\ge 2$.

Suppose $G$ has a cycle $(u_0,\dots,u_m,u_0)$ which does not cover all vertices of $G$. Then $|\zeta_{u_0}^{\lambda}(u_1)\cdots\zeta_{u_{m-1}}^{\lambda}(u_m)\zeta_{u_m}^{\lambda}(u_0)| \le (1-\frac{z_{\lambda}^2}{4})$ for any $\lambda$ in the bulk spectrum $\cup_r J_r$.
\end{prp}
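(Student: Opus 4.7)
The plan is to apply the conservation-of-imaginary-parts identity \eqref{e:magicim} at each vertex of the cycle and telescope the resulting relations around the loop. Set $T_i := |\zeta_{u_i}^\lambda(u_{i+1})|$ and $S_i := |\Im \zeta_{u_i}^\lambda(u_{i+1})|$ for $i = 0, 1, \ldots, m$, with cyclic indices so that $u_{m+1} := u_0$. Applying \eqref{e:magicim} at the vertex $u_{i+1}$ with $(x_{k-1}, x_k) = (u_i, u_{i+1})$ and singling out the ``next cycle'' contribution $|\Im \zeta_{u_{i+1}}^\lambda(u_{i+2})| = S_{i+1}$ yields
\[
T_i^2 = \frac{S_i}{S_{i+1} + \epsilon_{i+1}}, \qquad \epsilon_{j} := \sum_{u \in \cN_{u_j}\setminus\{u_{j-1},u_{j+1}\}} |\Im \zeta_{u_j}^\lambda(u)| \ge 0.
\]
Every summand in $\epsilon_j$ is bounded below by $z_\lambda$ by definition, so $\epsilon_j \ge z_\lambda$ as soon as $\deg u_j \ge 3$.

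The hypothesis enters next. Since $G$ is connected of minimal degree $\ge 2$ and the cycle omits at least one vertex of $G$, at least one cycle vertex $u_{j_0}$ must be adjacent to a vertex outside the cycle, so $\deg u_{j_0} \ge 3$ and $\epsilon_{j_0} \ge z_\lambda$. Taking the product over $i = 0, \ldots, m$ of the identities $T_i^2 = S_i/(S_{i+1}+\epsilon_{i+1})$, dropping the nonnegative $\epsilon_{i+1}$ for every $i+1 \neq j_0$, the remaining ratios $\prod_{i+1\neq j_0} S_i/S_{i+1}$ telescope and give
\[
|\zeta_{u_0}^\lambda(u_1)\cdots\zeta_{u_m}^\lambda(u_0)|^2 = \prod_{i=0}^m T_i^2 \le \frac{S_{j_0}}{S_{j_0}+z_\lambda}.
\]
Since $S_{j_0} \le |\zeta_{u_{j_0}}^\lambda(u_{j_0+1})| \le z_\lambda^{-1}$ by \eqref{e:zboun1}, and $x \mapsto x/(x+z_\lambda)$ is increasing, the right-hand side is at most $1/(1+z_\lambda^2)$.

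To conclude I would verify $(1+z^2)^{-1/2} \le 1 - z^2/4$ on $z \in [0,1]$. The admissible range follows from $z_\lambda \le |\Im \zeta^\lambda(b)| \le |\zeta^\lambda(b)| \le z_\lambda^{-1}$ evaluated at a minimizing edge, forcing $z_\lambda \le 1$. Squaring reduces the inequality to $(1-z^2/4)^2(1+z^2) \ge 1$, i.e.\ $z^2(z^4 - 7z^2 + 8) \ge 0$, and the quadratic $X^2 - 7X + 8$ in $X = z^2$ has both roots exceeding $1$. I expect no real obstacle in this plan; the only delicate point is organizing the cyclic telescoping cleanly, and everything else is a direct consequence of the recursive identity \eqref{e:magicim} together with the elementary degree argument.
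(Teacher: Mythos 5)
Your proof is correct and takes essentially the same route as the paper: both arguments exploit the conservation identity \eqref{e:magicim} around the cycle together with the extra neighbor at a cycle vertex of degree $\ge 3$ (guaranteed by connectedness and the cycle not covering $G$), the only difference being that you telescope the local identities while the paper extracts two paths from the iterated identity \eqref{e:recurpath}. Your final numeric bound $|{\prod\zeta}|^2 \le \frac{1}{1+z_\lambda^2}$ is marginally sharper than the paper's $1-\frac{z_\lambda^2}{2}$, and both yield the stated $(1-\frac{z_\lambda^2}{4})^2$.
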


The proposition is also valid for all $\lambda\in \R$ if we replace the bound by $\le 1$.

\begin{proof}
As the cycle does not cover $G$, we may assume $\deg(u_0)\ge 3$. Then besides the two neighbors $u_1,u_m$, we know that $u_0$ has at least one neighbor $w_0$ outside the cycle.

Using \eqref{e:recurpath}, $\sum_{(v_2;v_{m+2})} |\zeta_{u_0}^{\lambda}(u_1)\zeta_{u_1}^{\lambda}(v_2)\cdots\zeta_{v_m}^{\lambda}(v_{m+1})|^2|\Im \zeta_{v_{m+1}}^{\lambda}(v_{m+2})| = |\Im \zeta_{u_0}^{\lambda}(u_1)|$, where the sum is over all non-backtracking paths $(v_2;v_{m+2})$ leaving the oriented edge $(u_0,u_1)$. There are at least two paths~: $(u_2,u_3,\dots,u_m,u_0,u_1)$ and $(u_2,u_3,\dots,u_m,u_0,w_0)$. Hence,
\[
|\zeta_{u_0}^{\lambda}(u_1)\cdots\zeta_{u_m}^{\lambda}(u_0)|^2\left(|\Im \zeta_{u_0}^{\lambda}(u_1)|+|\Im\zeta_{u_0}^{\lambda}(w_0)|\right) \le |\Im \zeta_{u_0}^{\lambda}(u_1)| \,,
\]
so $|\zeta_{u_0}^{\lambda}(u_1)\cdots\zeta_{u_m}^{\lambda}(u_0)|^2 \le \frac{|\Im \zeta_{u_0}^{\lambda}(u_1)|}{|\Im \zeta_{u_0}^{\lambda}(u_1)|+|\Im\zeta_{u_0}^{\lambda}(w_0)|}$. Since $\frac{|\Im \zeta_{u_0}^{\lambda}(w_0)|}{|\Im \zeta_{u_0}^{\lambda}(u_1)|+|\Im\zeta_{u_0}^{\lambda}(w_0)|} \ge \frac{z_{\lambda}}{2 z_{\lambda}^{-1}}$ by \eqref{e:zboun1}, we get $|\zeta_{u_0}^{\lambda}(u_1)\cdots\zeta_{u_m}^{\lambda}(u_0)|^2 \le 1- \frac{z_{\lambda}^2}{2} \le (1-\frac{z_{\lambda}^2}{4})^2$.
\end{proof}

From this, we get $|\cM_{n,\lambda}(b_1,b_1)|^2\le \frac{16 z_{\lambda}^{-4}}{n^2}$ and
\begin{equation}\label{e:keroncyc}
|\cM_{n,\lambda}(b_1,e_{r+1})|^2 \le \frac{16 z_{\lambda}^{-4}}{n^2} \cdot \frac{|\zeta_{u_0}^{\lambda}(u_1)\cdots\zeta_{u_{r-1}}^{\lambda}(u_r)|^2|\Im\zeta_{u_r}^{\lambda}(u_{r+1})|}{|\Im \zeta_{u_0}^{\lambda}(u_1)|} \le \frac{16 z_{\lambda}^{-4}}{n^2} \,,
\end{equation}
where the last inequality is due to \eqref{e:recurpath}, since $(u_2;u_{r+1})$ is just one of the paths $(x_2;x_{r+1})$.

For general points $b_{r+1}$ (in and out of $C$), let $\{u_{j_1},\dots,u_{j_s}\}$ be the vertices in $C$ of degree $\ge 3$. Using \eqref{e:outbr} and Proposition~\ref{prp:zetacyc} again, we have for $C_{\lambda}=16z_{\lambda}^{-4}$,
\begin{multline*}
\sum_{b'\in B} |\cM_{n,\lambda}(b_1,b')|^2 = \sum_{r=1}^n\left(\sum_{b_{r+1}\in \cB^rb_1 \cap C} + \sum_{b_{r+1}\in \cB^rb_1\cap C^c}\right) |\cM_{n,\lambda}(b_1,b_{r+1})|^2 \\
\le \frac{C_{\lambda}}{n} + \frac{C_{\lambda}}{n^2}\sum_{i=1}^s|\zeta_{u_0}^{\lambda}(u_1)\cdots\zeta_{u_{j_i-1}}^{\lambda}(u_{j_i})|^2\sum_{r=j_i}^n \sum_{(v_{j_i+1};v_{r+1})}\frac{|\zeta_{u_{j_i}}^{\lambda}(v_{j_i+1})\cdots\zeta_{v_{r-1}}^{\lambda}(v_r)|^2|\Im \zeta_{v_r}^{\lambda}(v_{r+1})|}{|\Im \zeta_{u_0}^{\lambda}(u_1)|}\,,
\end{multline*}
where we used \eqref{e:keroncyc} and the fact that $\cB^rb_1 \cap C$ is reduced to one edge (namely $e_{t+1}$, where $t=r$ $\mod m$). The last sum is over all paths outgoing from $(u_{j_i-1},u_{j_i})$ and leaving $C$. In particular, from \eqref{e:recurpath},
\[
\sum_{b'\in B} |\cM_{n,\lambda}(b_1,b')|^2 \le \frac{C_{\lambda}}{n} + \frac{C_{\lambda}}{n}\sum_{i=1}^s\frac{|\zeta_{u_0}^{\lambda}(u_1)\cdots\zeta_{u_{j_i-2}}^{\lambda}(u_{j_i-1})|^2|\Im \zeta_{u_{j_i-1}}^{\lambda}(u_{j_i})|}{|\Im \zeta_{u_0}^{\lambda}(u_1)|} \,.
\]
Finally, using \eqref{e:recurpath} again, if $(u_0,u_1,x_2;x_{m+1})$ are the $m$-paths outgoing from $(u_0,u_1)$, then $\sum_{(x_2;x_{m+1})} |\zeta_{u_0}^{\lambda}(u_1)\zeta_{u_1}^{\lambda}(x_2)\cdots\zeta_{x_{m-1}}^{\lambda}(x_m)|^2|\Im \zeta_{x_m}^{\lambda}(x_{m+1})| = |\Im\zeta_{u_0}^{\lambda}(u_1)|$. In particular,
\begin{multline*}
\sum_{i=1}^s |\zeta_{u_0}^{\lambda}(u_1)\cdots\zeta_{u_{j_i-1}}^{\lambda}(u_{j_i})|^2\sum_{(x_{j_i+1};x_{m+1})} |\zeta_{u_{j_i}}^{\lambda}(x_{j_i+1})\cdots\zeta_{x_{m-1}}^{\lambda}(x_m)|^2|\Im\zeta_{x_m}^{\lambda}(x_{m+1})| \\
\le |\Im \zeta_{u_0}^{\lambda}(u_1)| \,,
\end{multline*}
as this is the same sum restricted to paths leaving the cycle. Applying \eqref{e:recurpath} to the left-hand side, we finally obtain
\[
\sum_{b'\in B} |\cM_{n,\lambda}(b_1,b')|^2 \le \frac{32z_{\lambda}^{-4}}{n} \,.
\]

Back to \eqref{e:cauchy-schwarz}, note that since $\|\psi_{\lambda}\|=1$, we have $\|f_{\lambda}\| \le \sqrt{D}(1+z_{\lambda}^{-1})$ by \eqref{e:zboun1} and $|\Im \zeta^{\lambda}|\ge z_{\lambda}$. Here $D$ is the maximal degree of $G$. Finally returning to \eqref{e:rebulk}, estimating the $g_{\lambda}$ similarly, we get
\[
\left\|\psi_{\lambda}\right\|_{\infty} \le \frac{1}{2z_{\lambda}} \cdot \frac{\sqrt{32}z_{\lambda}^{-2}}{\sqrt{n}} \cdot (2\sqrt{D} (1+z_{\lambda}^{-1})) \le \frac{2\sqrt{32D} z_{\lambda}^{-4}}{\sqrt{n}} \,,
\]
where we used that $z_{\lambda}\le 1$, as follows from $z_{\lambda}\le |\zeta^{\lambda}|\le z_{\lambda}^{-1}$. The bound $\sqrt{32D}=4\sqrt{2D}\le 4D$ completes the proof.

\subsection{Outside the spectrum} \label{sec:outsidesup}

To deal with energies outside the spectrum, we use a Combes-Thomas estimate. This tool is a common ingredient in the theory of random Schr\"odinger operators to control the Green kernel.

Intuitively, if $\lambda\notin \sigma(H)$, then $\|G^{\lambda} \delta_x\|^2 = \sum_y |G^{\lambda}(y,x)|^2<\infty$, implying that $|G^{\lambda}(y,x)|$ must decay with $d(x,y)$. The Combes-Thomas estimate says that if $H$ is a Schr\"odinger operator, then the decay must be exponential.

\begin{lem}\label{lem:CT}
Let $\mathbb{G}$ be a countable graph with degree bounded by $D$ and $H = \cA+W$ a self-adjoint Schr\"odinger operator. Suppose $\lambda\notin\sigma(H)$ and let $\delta_{\lambda}=\dist(\lambda,\sigma(H))$. Then for any $x\in \mathbb{G}$,
\[
\sum_{y:\,d_{\mathbb{G}}(x,y)=n} |G^{\lambda}(y,x)|^2 \le \frac{4}{\delta_{\lambda}^2} \cdot \frac{1}{(1+\frac{\delta_{\lambda}}{2D})^{2n}} \,.
\]
\end{lem}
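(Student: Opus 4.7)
The plan is to apply the classical Combes-Thomas estimate: conjugate $H$ by a weight growing exponentially in $\rho(y):=d_{\mathbb{G}}(x,y)$, and read off exponential decay of $G^{\lambda}(\cdot,x)$ from a norm bound on the resulting resolvent applied to $\delta_x$.

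For $\alpha>0$, introduce the bounded operator $K_\alpha$ on $\ell^2(\mathbb{G})$ with kernel
\[
K_\alpha(y,z)=\bigl(e^{\alpha(\rho(y)-\rho(z))}-1\bigr)\cA(y,z),
\]
so that $\tilde H_\alpha:=H+K_\alpha$ is the bounded realization of the formal conjugate $M_\alpha H M_\alpha^{-1}$, where $M_\alpha$ is multiplication by $e^{\alpha\rho}$ (the diagonal $W$ commutes with $M_\alpha$ and contributes nothing). Since $|\rho(y)-\rho(z)|\le 1$ when $y\sim z$, each entry of $K_\alpha$ has modulus at most $e^\alpha-1$, and every row and column contains at most $D$ nonzero entries, so Schur's test gives $\|K_\alpha\|\le D(e^\alpha-1)$. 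Choosing $\alpha$ by $e^\alpha=1+\delta_\lambda/(2D)$ forces $\|K_\alpha\|\le\delta_\lambda/2$, and combined with $\|(H-\lambda)^{-1}\|\le 1/\delta_\lambda$ the factorization
\[
\tilde H_\alpha-\lambda=(H-\lambda)\bigl(I+(H-\lambda)^{-1}K_\alpha\bigr)
\]
is invertible via Neumann series, with $\|(\tilde H_\alpha-\lambda)^{-1}\|\le 2/\delta_\lambda$.

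To extract the bound on $G^\lambda$, set $u(y)=G^{\lambda}(y,x)$ and $v(y):=e^{\alpha\rho(y)}u(y)$. A direct matrix computation using $(H-\lambda)u=\delta_x$ together with $\rho(x)=0$ shows $(\tilde H_\alpha-\lambda)v=\delta_x$ pointwise. Membership $v\in\ell^2$ with $\|v\|\le 2/\delta_\lambda$ follows by truncation: replace $e^{\alpha\rho}$ by the bounded, invertible weight $\xi_N:=e^{\alpha\min(\rho,N)}$, rerun the perturbation estimate for the now-bounded conjugation $\xi_N H\xi_N^{-1}=H+K_{\alpha,N}$ (where $\|K_{\alpha,N}\|\le D(e^\alpha-1)$ by the same Schur bound), apply the resulting resolvent to $\delta_x$ to get $\sum_y e^{2\alpha\min(\rho(y),N)}|G^{\lambda}(y,x)|^2\le 4/\delta_\lambda^2$, and pass $N\to\infty$ by monotone convergence. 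Restricting the resulting inequality $\sum_y e^{2\alpha\rho(y)}|G^{\lambda}(y,x)|^2\le 4/\delta_\lambda^2$ to the sphere $\rho(y)=n$ and dividing by $e^{2\alpha n}=(1+\delta_\lambda/(2D))^{2n}$ yields the stated bound. The only real subtlety is this truncation step, which is needed because $M_\alpha$ itself is unbounded on $\ell^2(\mathbb{G})$; everything else reduces to the Schur estimate on $K_\alpha$ plus one Neumann series inversion.
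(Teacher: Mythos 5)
Your proposal is correct and follows essentially the same Combes--Thomas route as the paper: conjugate $H$ by an exponential weight, bound the resulting perturbation by a Schur test, choose $e^\alpha = 1+\delta_\lambda/(2D)$ so that the perturbation has norm at most $\delta_\lambda/2$, and invert by a Neumann series to get $\|(\tilde H_\alpha-\lambda)^{-1}\|\le 2/\delta_\lambda$. The truncation $\xi_N = e^{\alpha\min(\rho,N)}$ that you flag as ``the only real subtlety'' is precisely the paper's choice of weight $M = e^{\mu\min(d(\cdot,x),R)}$ — the paper simply works with the bounded truncated weight from the outset, whereas you first describe the formal unbounded conjugation and then retreat to the truncation, which is a cosmetic rather than mathematical difference.
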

The estimate holds more generally if we replace $\cA$ by $(\cA_p f)(x) = \sum_{y\sim x} p_y(x) f(y)$, $p_y(x)=p_x(y)$, in which case $D$ should be replaced by $\max_x \sum_{y\sim x} p_x(y)$.
\begin{proof}
The proof is contained in \cite[Theorem 10.5]{AW15} or \cite[Theorem 11.2]{Ki08}.

Fix a cutoff $R>0$ and consider the multiplication operator $M = e^{\mu \min(d(\cdot,x),R)}$, where $\mu>0$ will be determined later. Then $M \delta_u = c_{u,x,R} \delta_u$, where $c_{u,x,R} = e^{\mu \min(d(u,x),R)}$. In particular, if $d(y,x)=n \le R$, we get
\[
e^{\mu n} G^{\lambda}(y,x) = \langle \delta_y, M(H-\lambda)^{-1}M^{-1} \delta_x\rangle = \langle \delta_y, (MHM^{-1}-\lambda)^{-1}\delta_x\rangle \,.
\]
Hence, if $B:=MHM^{-1}-H$,
\begin{multline*}
\sum_{y,d(y,x)=n} |G^{\lambda}(y,x)|^2 = e^{-2\mu n} \sum_{y,d(y,x)=n} \left|[(MHM^{-1}-\lambda)^{-1}\delta_x](y)\right|^2 \\
\le e^{-2\mu n} \|(MHM^{-1}-\lambda)^{-1}\delta_x\|^2 \le e^{-2\mu n} \|(H+B-\lambda)^{-1}\|^2 \,.
\end{multline*}
Now $B(u,u') = c_{u,x,R}c_{u',x,R}^{-1}-1$ if $u\sim u'$ and $B(u,u')=0$ otherwise. In particular, $|B(u,u')| \le e^{\mu}-1$, $\sum_{u'} |B(u,u')| \le D(e^{\mu} -1)$ and $\sum_u |B(u,u')| \le D(e^{\mu}-1)$. If follows that $\|B\| \le D(e^{\mu}-1)$. Recall $\delta_{\lambda} = \dist(\lambda,\sigma(H))$. As is well-known, if $\|B\|<\delta_{\lambda}$, then $\|(H+B-\lambda)^{-1}\|\le \frac{1}{\delta_{\lambda}-\|B\|}$. Choosing $\mu = \ln(1+\frac{\delta_{\lambda}}{2D})$ proves the claim.
\end{proof}

We may now prove part (2) of Theorem~\ref{thm:main}. Let $\lambda\notin \sigma(H_{\cT})$. We use Proposition~\ref{prp:retour} with $k=r$; focus on the first sum. For $n\le \ell_G$, we have
\begin{align*}
\left|\sum_{(x_2;x_{r+1})} G^{\lambda}(\tilde{x}_0,\tilde{x}_{r+1})\psi_{\lambda}(x_r)\right|^2 &\le D\sum_{(x_2;x_{r+1})}|G^{\lambda}(\tilde{x}_0,\tilde{x}_{r+1})|^2\sum_{(x_2;x_r)}|\psi_{\lambda}(x_r)|^2 \\
& \le 2D\sum_{(x_2;x_{r+1})}|G^{\lambda}(\tilde{x}_0,\tilde{x}_{r+1})|^2\sum_{y:d_G(y,x_0)\le r}|\psi_{\lambda}(y)|^2 \,,
\end{align*}
where in the first inequality we used Cauchy-Schwarz and the fact that $\#\{x_{r+1}\} =|\cN_{x_r}|-1 < D$. In the second inequality, the factor $2$ comes from the second sum involving $\psi_{\lambda}$~: as mentioned in Remark~\ref{rem:tangle}, there can be at most two paths of a fixed length $r$ between $x_0$ and $y\in B_G(x_0,n)$. The first sum is over a subset of $\{v:d_{\cT}(\tilde{x}_0,v)=r+1\}$, so by Lemma~\ref{lem:CT}, we get
\[
\left|\sum_{(x_2;x_{r+1})} G^{\lambda}(\tilde{x}_0,\tilde{x}_{r+1})\psi_{\lambda}(x_r)\right|^2 \le \frac{8D}{\delta_{\lambda}^2} \cdot \frac{1}{(1+\frac{\delta_{\lambda}}{2D})^{2r}} \|\psi_{\lambda}\|_2^2 \,.
\]
The other sums in Proposition~\ref{prp:retour} are bounded similarly. We conclude using $8\sqrt{2D}\le 8D$.

\subsection{Proof of the corollary}

Finally, since $\|\psi_{\lambda}\|_2^2=1$, we deduce that
\[
\|\psi_{\lambda}\|_p = \left(\sum_{x\in V} |\psi_{\lambda}(x)|^p\right)^{1/p} \le \|\psi_{\lambda}\|_{\infty}^{\frac{p-2}{p}} \left(\sum_{x\in V}|\psi_{\lambda}(x)|^2\right)^{1/p} = \|\psi_{\lambda}\|_{\infty}^{\frac{p-2}{p}} \,.
\]
For the second part of the corollary, simply observe that
\[
\varepsilon \le \|\chi_{\Lambda}\psi_{\lambda}\|_2^2 = \sum_{x\in \Lambda} |\psi_{\lambda}(x)|^2 \le \|\psi_{\lambda}\|_{\infty}^2 \cdot |\Lambda| \,.
\]
This completes the proof of Theorem~\ref{thm:main} and Corollary~\ref{cor:bl}.

\section{Case of Schr\"odinger cycles} \label{sec:cycles}

In this section, we consider the case of Schr\"odinger operators on an $N$-cycle $C_N = \{0,\dots,N-1\}$. We denote the potentials by $W_j = W(j)$.

Note that Theorem~\ref{thm:specone} does not apply to this model as \textbf{(C1)} does not hold. Fortunately, the analog of Theorem~\ref{thm:specone} already exists in the literature. In fact, the universal cover of $(C_N,W)$ is just $(\Z,W)$ and $H_{\Z}$ is just a periodic Schr\"odinger operator on $\Z$, where $W_{j+kN} = W_j$.

\begin{thm}[cf. \cite{Si11}] \label{thm:specone2}
Let $(\Z,W)$ be the universal cover of $(C_N,W)$.

\begin{enumerate}[\rm (i)]
\item The spectrum of $H_{\Z}$ is purely absolutely continuous, and consists of at most $N$ bands~: $\sigma(H_{\Z}) = \cup_{r=1}^N I_r$.
\item If $W$ is $m$-periodic on $C_N$, say $N=mN'$ with $W_{j+m}=W_j$, the spectrum of $H_{\Z}$ has at most $m$ bands. Moreover, $G_{\Z}^{\lambda+\ii0}(j+km,j+km) = G_{\Z}^{\lambda+\ii0}(j,j)$ and $\zeta_{j+km}^{\lambda+\ii 0}(j+ km \pm 1) = \zeta_j^{\lambda+\ii0}(j\pm 1)$.
\item The limits $\zeta_j^{\lambda+\ii0}(j\pm 1)$ and $G^{\lambda+\ii 0}_{\Z}(j,k)$ exists for any $j,k\in\Z$ if $\lambda$ is in the interior $\mathring{I}_r$. At least one Green function $G^{\lambda+\ii\eta}(j,j)$ blows up as $\eta \downarrow 0$ at the endpoints of $I_r$.
\item For $\lambda\in\mathring{I}_r$, the Green functions are pure imaginary~: $\Re G^{\lambda+\ii0}(j,j) = 0$.
\item In general, $G^{\lambda+\ii 0}(j,j)$ only vanishes in the gaps between $I_r$. More precisely, $G^{\lambda+\ii 0}(j,j)$ has exactly one zero in each gap (this zero can occur at an endpoint of $I_r$).
\item If $\lambda\in \mathring{I}_r$, then $|\Im \zeta_j^{\lambda+\ii0}(j\pm 1) | > 0$ and $\Im G^{\lambda+\ii0}(j,j)>0$ for all $j$.
\end{enumerate}
\end{thm}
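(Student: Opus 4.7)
The plan is to invoke the classical Floquet--Bloch theory for periodic Jacobi operators on $\Z$, since by construction $(\Z,W)$ is an $N$-periodic (or $m$-periodic) Schrödinger operator. Introduce the transfer matrix $T_j(\lambda) = \bigl(\begin{smallmatrix} \lambda - W_j & -1 \\ 1 & 0 \end{smallmatrix}\bigr)$ and the monodromy $M(\lambda) = T_{N-1}(\lambda)\cdots T_0(\lambda)$; its trace, the discriminant $\Delta(\lambda) := \tr M(\lambda)$, is a polynomial of degree $N$ in $\lambda$. Standard Floquet theory then gives
\[
\sigma(H_{\Z}) = \{\lambda \in \R : |\Delta(\lambda)| \le 2\},
\]
a union of at most $N$ closed bands $I_r$. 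Pure absolute continuity follows from the direct integral decomposition $H_{\Z} \cong \int_{[0,2\pi)}^{\oplus} H_\theta \, \tfrac{\dd\theta}{2\pi}$ into $N \times N$ self-adjoint fiber operators with quasi-periodic boundary conditions, whose $N$ eigenvalues $E_r(\theta)$ are real-analytic and non-constant in $\theta$. This yields (i); (ii) is the same argument with period $m$, combined with the $m$-translation invariance of $H_{\Z}$, which transports $G^\gamma$ and $\zeta^\gamma$ along the shift.

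Items (iii), (iv), (vi) I would handle via the two Floquet solutions $\phi_{\pm}(\lambda,\cdot)$ of $(H_{\Z}-\lambda)\phi=0$, which for $|\Delta(\lambda)|<2$ satisfy $\phi_{\pm}(j+N) = \eul^{\pm\ii N\theta(\lambda)}\phi_{\pm}(j)$ with $2\cos(N\theta) = \Delta(\lambda)$, and can be chosen so that $\phi_- = \overline{\phi_+}$ on $\mathring{I}_r$. The standard representation
\[
G^{\lambda}_{\Z}(j,k) = \frac{\phi_+(\lambda,\max(j,k))\,\phi_-(\lambda,\min(j,k))}{W(\phi_+,\phi_-)}
\]
has Wronskian purely imaginary and non-vanishing on $\mathring{I}_r$; this gives the continuous extension of $G^{\lambda+\ii 0}(j,k)$ (and of $\zeta_j^{\lambda+\ii 0}(j\pm1)$ via \eqref{e:rec}, \eqref{e:zetainv}). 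At band endpoints $\Delta = \pm 2$ the two Floquet solutions merge and $W(\phi_+,\phi_-)\to 0$; since a merged Floquet solution cannot vanish identically over one period, at least one $G^{\lambda+\ii\eta}(j,j)$ must diverge, completing (iii). Because $\phi_- = \overline{\phi_+}$, the diagonal numerator $|\phi_+(j)|^2$ is real while the Wronskian is purely imaginary, so $\Re G^{\lambda+\ii 0}(j,j)=0$, proving (iv). Positivity of $\Im G$ and $|\Im\zeta|$ on $\mathring{I}_r$ in (vi) is then the Herglotz property together with $|\phi_+(j)|^2>0$ and the strict monotonicity of $\theta(\lambda)$ in each band.

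For (v), the same representation shows $G^{\lambda+\ii 0}(j,j) = 0$ iff $\phi_+(\lambda,j) = 0$ or $\phi_-(\lambda,j) = 0$; in a gap $|\Delta(\lambda)|>2$ the Floquet solutions are real and exponentially growing/decaying, so this vanishing condition singles out the \emph{Dirichlet eigenvalues at $j$} of the one-period problem. These are the zeros of a polynomial of degree $N-1$ (essentially the appropriate entry of $M(\lambda)$), and a classical interlacing argument for periodic Jacobi matrices --- tracking signs of this polynomial at the $2N$ real zeros of $\Delta(\lambda)^2-4$ which bound the bands --- places exactly one Dirichlet eigenvalue in each closed gap, possibly at a band endpoint. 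The main obstacle is precisely this interlacing step: everything else flows directly from the direct integral decomposition and the explicit Floquet formula, whereas the counting of Dirichlet zeros per gap is the only genuinely non-trivial input. Since all six items are classical results on periodic Jacobi operators, the proof can ultimately be kept very short by invoking \cite{Si11} once the Floquet framework is set up.
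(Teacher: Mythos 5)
Your proposal is correct and follows essentially the same route as the paper: both reduce the theorem to classical Floquet theory for the periodic Jacobi operator on $\Z$ (discriminant and band structure, direct-integral/Floquet solutions $u^{\pm}$ with $u^-=\overline{u^+}$ on band interiors, the Wronskian representation of the Green function, and the one-Dirichlet-zero-per-gap interlacing for item (v)), with the non-trivial inputs deferred to \cite{Si11} exactly as the paper does. The only minor difference is in item (vi), where the paper propagates $|\Im \zeta_j^{\lambda+\ii 0}(j\pm1)|>0$ along $\Z$ by induction on the recursion \eqref{e:rec} starting from $j=0,1$, whereas you read it off the non-vanishing, purely imaginary Wronskian directly.
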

\begin{proof}
We simply indicate the references. For (i), see \cite[Theorem 5.3.7]{Si11}. If $W$ is $m$-periodic, $H_{\Z}$ is also $m$-periodic, so we have at most $m$ bands. The periodicity of the Green functions simply follows from the isomorphisms of the rooted graphs $(\Z,W,j)$ and $(\Z,W,j+km)$ for any root $j$. Alternatively, one sees this from \cite[eq. (5.4.72)]{Si11}.

For (iii)--(v), see Corollary 5.4.6, Theorem 5.4.15 and Theorem 5.4.17 in \cite{Si11}. Note that with our notations, $\zeta_j^{\gamma}(j+1) = \frac{u_{j+1}^+(\gamma)}{u_j^+(\gamma)}$ and $\zeta_{j+1}^{\gamma}(j) = \frac{u_j^-(\gamma)}{u_{j+1}^-(\gamma)}$ correspond to the $m$-functions in \cite{Si11}, and $G^{\lambda+\ii0}(j,j) = \frac{|u_j^+(\lambda)|^2}{2\ii \Im u_1^+(\lambda)}$. Within $\mathring{I}_k$, the spectral measure is $\dd \nu(\lambda) = \frac{1}{\pi N} |\theta'(\lambda)|\,\dd\lambda = \frac{1}{N\pi}\sum_{r=0}^{N-1} \Im G^{\lambda+\ii0}(r,r)$, see \cite[eq. (5.3.34), (5.10.14)]{Si11}, with $|\theta'(\lambda)|=|\frac{1}{e_k'(\theta_{\lambda})}|>0$ for all $\lambda\in\mathring{I}_k$ (see \cite[Theorem 5.3.4]{Si11}). The function $e_k$ is analytic on $\mathring{I}_k$. All this implies all Green functions $G^{\lambda+\ii0}(j,j)$ exist on $\mathring{I}_k$, and at least one of them has a positive imaginary part. In particular, all $u_j^+(\lambda)$ exist on $\mathring{I}_k$, we have $|\Im u_1^+(\lambda)|>0$ and $u_0^{\pm}(\lambda)=1$ by definition. This shows $\zeta_j^{\lambda+\ii 0}(j+1)$ exists for $j=0,1$ and $|\Im\zeta_0^{\lambda+\ii0}(1)|>0$. The recursive formula \eqref{e:rec} here is just $\frac{1}{\zeta_{j-1}^{\gamma}(j)} = \gamma-W_j-\zeta_j^{\gamma}(j+1)$. It follows that $|\Im \zeta_1^{\lambda+\ii0}(2)|>0$. By induction, if $\zeta_{j-1}^{\lambda+\ii 0}(j)$ exists and $\Im\zeta_{j-1}^{\lambda+\ii 0}(j)$ is nonzero, then in particular $\zeta_{j-1}^{\lambda+\ii0}(j)\neq 0$, so $\zeta_j^{\lambda+\ii0}(j+1)$ exists and $|\Im \zeta_j^{\lambda+\ii0}(j+1)|>0$ for all $j$. Using \eqref{e:rec}, this shows all $G^{\lambda+\ii 0}(j,j)$ have a positive imaginary part. Finally, by \cite[Theorem 5.4.15]{Si11}, we have $u_j^-(\lambda) = \overline{u_j^+(\lambda)}$ on $\mathring{I}_r$, so we deduce the same results for $\zeta_{j+1}^{\lambda+\ii0}(j)$. This completes the proof. 
\end{proof}

We may now proceed with the eigenfunction estimates. Here, the sums $\sum_{(x_2;x_{r+1})}$ in \eqref{e:outrep} reduce to a single path. We take $k=r$ and apply \eqref{e:outrep} for each $r\le n$ to get
\begin{multline*}
\psi_{\lambda}(j) = \frac{1}{n}\sum_{r=1}^n \left[G^{\lambda}(j,j+r+1)\psi_{\lambda}(j+r) - G^{\lambda}(j,j+r)\psi_{\lambda}(j+r+1) \right] \\
+ \frac{1}{n}\sum_{r=1}^n\left[G^{\lambda}(j,j-r)\psi_{\lambda}(j-r+1) - G^{\lambda}(j,j-r+1)\psi_{\lambda}(j-r)\right] .
\end{multline*}
In fact, here the $\cB$ from \eqref{e:nonbackoper} is just a shift~: $(\cB f)(j,j+1) = f(j+1,j+2)$.

We take $n=N$ and use Cauchy-Schwarz~:
\[
\left|\frac{1}{N}\sum_{r=1}^N G^{\lambda}(j,j+r+1)\psi_{\lambda}(j+r)\right|^2 \le \frac{1}{N}\sum_{r=1}^N|G^{\lambda}(j,j+r+1)|^2|\psi_{\lambda}(j+r)|^2 \,.
\]

For $\lambda\notin\sigma(H_{\Z})$, we just bound $|G^{\lambda}(j,j+r+1)| \le \frac{1}{\delta_{\lambda}}$, then the right-hand side is bounded by $\frac{\delta_{\lambda}^{-2}}{N} \|\psi_{\lambda}\|_2^2$, proving the claim. For $\lambda\in \mathring{I}_r$, using \eqref{e:rec}, we know $\frac{|\Im\zeta_{j-1}^{\lambda}(j)|}{|\zeta_{j-1}^{\lambda}(j)|^2}=|\Im \zeta_j^{\lambda}(j+1)|$, so by \eqref{e:greenmulti}, we have
\[
|G^{\lambda}(j,j+r+1)|^2 = |G^{\lambda}(j,j)\zeta_j^{\lambda}(j+1)\cdots\zeta_{j+r}^{\lambda}(j+r+1)|^2=\frac{|G^{\lambda}(j,j)|^2\cdot |\Im\zeta_j^{\lambda}(j+1)|}{|\Im \zeta_{j+r+1}^{\lambda}(j+r+2)|} \,.
\]
It follows by Theorem~\ref{thm:specone2} that $|G^{\lambda}(j,j+r+1)|^2\le C_{\lambda}$ and thus $\|\psi_{\lambda}\|_{\infty}^2 \le \frac{4C_{\lambda}}{N}\|\psi_{\lambda}\|_2^2$. Note that for $\lambda\in\mathring{I}_r$, we are assuming the potential is $m$-periodic, so all Green functions are those of $m$-periodic Schr\"odinger operators, and the constant $C_{\lambda}$ may only depend on $m$ (but not on $N$).

\section{The proof for p-norms}

The aim of this section is to prove Theorem~\ref{thm:psupp}.

\subsection{A TT* argument}
Recall the operator
\[
\cM_{n,\lambda} = \frac{1}{n}\sum_{r=1}^n\frac{1}{|\Im \zeta^{\lambda}|^{1/2}}\left(\zeta^{\lambda}\cB\right)^r|\Im\zeta^{\lambda}|^{1/2} \,.
\]
For $\lambda$ in the bulk spectrum, we deduce from \eqref{e:rebulk} that
\begin{equation}\label{e:pnormpsif}
\|\psi_{\lambda}\|_p \le \left\|\frac{1}{|\Im\zeta^{\lambda}|^{1/2}}\cM_{n,\lambda}\frac{f_{\lambda}}{|\Im \zeta^{\lambda}|^{1/2}}\right\|_p + \left\|\frac{1}{|\Im\zeta^{\lambda}|^{1/2}}\overline{\cM_{n,\lambda}}\frac{g_{\lambda}}{|\Im \zeta^{\lambda}|^{1/2}}\right\|_p\,,
\end{equation}
where $\zeta^{\lambda}(x_0,x_1):=\zeta_{x_0}^{\lambda}(x_1)$.

We study the first term; the other one is similar. Recalling \eqref{e:zlam}, we have
\begin{equation}\label{e:flambdap}
\left\|\frac{1}{|\Im\zeta^{\lambda}|^{1/2}}\cM_{n,\lambda}\frac{f_{\lambda}}{|\Im \zeta^{\lambda}|^{1/2}}\right\|_p \le z_{\lambda}^{-1}\|\cM_{n,\lambda}\|_{2\to p} \cdot \|f_{\lambda}\|_2 \,.
\end{equation}
On the other hand, we can use a so-called $TT^{\ast}$ argument, namely
\begin{equation}\label{e:ttstarn}
\|\cM_{n,\lambda}\|_{2\to p}^2 = \|\cM_{n,\lambda}^{\ast}\|_{p'\to 2}^2 = \| \cM_{n,\lambda}\cM_{n,\lambda}^{\ast} \|_{p'\to p} \,.
\end{equation}
So it suffices to estimate this operator norm to obtain a bound on the $p$-norm.

We start by calculating
\begin{equation}\label{e:ttstar}
\cM_{n,\lambda}\cM_{n,\lambda}^{\ast} = \frac{1}{n^2}\sum_{r,r'=1}^n\frac{1}{|\Im\zeta^{\lambda}|^{1/2}}\left(\zeta^{\lambda}\cB\right)^r|\Im\zeta^{\lambda}|\left(\cB^{\ast}\overline{\zeta^{\lambda}}\right)^{r'}\frac{1}{|\Im\zeta^{\lambda}|^{1/2}} \,.
\end{equation}
We have
\[
\left[\left(\zeta^{\lambda}\cB\right)^rf\right](x_0,x_1) = \sum_{(x_2;x_{r+1})} \zeta_{x_0}^{\lambda}(x_1)\cdots\zeta_{x_{r-1}}^{\lambda}(x_r) f(x_r,x_{r+1}) \,,
\]
where the sum runs over all paths $(x_2;x_{r+1})$ such that $x_2\in \cN_{x_1}\setminus \{x_0\}$. Similarly,
\[
\left[\left(\cB^{\ast}\overline{\zeta^{\lambda}}\right)^{r'}g\right](x_r,x_{r+1}) = \sum_{(y_0;y_{r-1})}\overline{\zeta_{y_0}^{\lambda}(y_1)\cdots\zeta_{y_{r-1}}^{\lambda}(x_r)} g(y_0,y_1) \,,
\]
where the sum runs over all paths $(y_0;y_{r-1})$ such that $y_{r-1}\in\cN_{x_r}\setminus\{x_{r+1}\}$. Hence,
\begin{multline*}
\left[\frac{1}{|\Im\zeta^{\lambda}|^{1/2}}\left(\zeta^{\lambda}\cB\right)^r|\Im\zeta^{\lambda}|\left(\cB^{\ast}\overline{\zeta^{\lambda}}\right)^r\frac{1}{|\Im\zeta^{\lambda}|^{1/2}}f\right](x_0,x_1) \\
= \sum_{(x_2;x_{r+1})}\sum_{(y_0;y_{r-1})} \frac{\zeta_{x_0}^{\lambda}(x_1)\cdots\zeta_{x_{r-1}}^{\lambda}(x_r)|\Im\zeta_{x_r}^{\lambda}(x_{r+1})|\,\overline{\zeta_{y_0}^{\lambda}(y_1)\cdots \zeta_{y_{r-1}}^{\lambda}(x_r)}}{|\Im\zeta_{x_0}^{\lambda}(x_1)|^{1/2}|\Im\zeta_{y_0}^{\lambda}(y_1)|^{1/2}} f(y_0,y_1) \,.
\end{multline*}

This double sum has complicated combinatorics~: we first sum on $r$-paths which are outgoing from $(x_0,x_1)$, then for each edge $e_r=(x_r,x_{r+1})$ in this $r$-sphere, we sum on $r$-paths which are in the ``past'' of $e_r$, i.e. outgoing from $(x_{r+1},x_r)$. Our aim now is to simplify this expression.

Fix $(x_2;x_{r+1})$. We write $\{(y_0;y_{r-1}):y_{r-1}\in \cN_{x_r}\setminus \{x_{r+1}\}\}$ as
\begin{multline*}
\left\{(x_0;x_{r-1})\right\} \cup \left\{(y_0,x_1,\dots,x_{r-1}):y_0\in\cN_{x_1}\setminus\{x_2\},y_0\neq x_0\right\} \\
\cup \cdots \cup\left\{(y_0;y_{r-1}):y_{r-1}\in \cN_{x_r}\setminus\{x_{r+1}\},y_{r-1}\neq x_{r-1}\right\} \,.
\end{multline*}
For $(y_0;y_{r-1}) = (x_0;x_{r-1})$, we get the term
\[
\left[\sum_{(x_2;x_{r+1})} \frac{|\zeta^{\lambda}_{x_0}(x_1)\cdots \zeta^{\lambda}_{x_{r-1}}(x_r)|^2|\Im\zeta_{x_r}^{\lambda}(x_{r+1})|}{|\Im\zeta_{x_0}^{\lambda}(x_1)|}\right]\cdot f(x_0,x_1) = f(x_0,x_1)\,.
\]
For the terms $(y_0,\dots, y_j,x_{j+1},\dots, x_{r-1})$, the restriction $y_j \in \cN_{x_{j+1}} \setminus \{x_{j+2}\}$, $y_j \neq x_j$ is equivalent to $x_{j+2}\in \cN_{x_{j+1}} \setminus \{x_j,y_j\}$, yielding for each $j=0,\dots,r-1$ the term
\begin{multline*}
\sum_{(x_2;x_{j+1})}\sum_{(y_0;y_j)'} \bigg[\sum_{(x_{j+2};x_{r+1}),x_{j+2}\in \cN_{x_{j+1}}\setminus\{x_j,y_j\}} \frac{\zeta^{\lambda}_{x_0}(x_1)\cdots \zeta^{\lambda}_{x_j}(x_{j+1}) \cdot \overline{\zeta^{\lambda}_{y_0}(y_1)\cdots \zeta^{\lambda}_{y_j}(x_{j+1})}}{|\Im\zeta_{x_0}^{\lambda}(x_1)|^{1/2}|\Im\zeta_{y_0}^{\lambda}(y_1)|^{1/2}} \\\cdot |\zeta^{\lambda}_{x_{j+1}}(x_{j+2})\cdots \zeta^{\lambda}_{x_{r-1}}(x_r)|^2|\Im\zeta_{x_r}^{\lambda}(x_{r+1})|\bigg]\cdot f(y_0,y_1)\\
= \sum_{(x_2;x_{j+1})}\sum_{(y_0;y_j)'}  \frac{\zeta_{x_0}^{\lambda}(x_1)\cdots \zeta_{x_j}^{\lambda}(x_{j+1}) \cdot \overline{\zeta^{\lambda}_{y_0}(y_1)\cdots \zeta^{\lambda}_{y_j}(x_{j+1})}}{|\Im \zeta_{x_0}^{\lambda}(x_1)\Im \zeta_{y_0}^{\lambda}(y_1)|^{1/2}} \\\cdot \left(\frac{|\Im \zeta_{x_j}^{\lambda}(x_{j+1})|}{|\zeta^{\lambda}_{x_j}(x_{j+1})|^2} - |\Im \zeta_{x_{j+1}}^{\lambda}(y_j)|\right) f(y_0,y_1)\,,
\end{multline*}
where $\sum_{(y_0;y_j)'}$ sums over all paths $(y_0;y_j)$ with $y_j\in \cN_{x_{j+1}} \setminus \{x_j\}$.

In the last expression, $\sum_{(x_2;x_{j+1})}\sum_{(y_0;y_j)'}$ can be rewritten as $\sum_{(x_2;x_{2j+2})}$, with $y_j$ re-labeled as $x_{j+2}$, $y_{j-1}$ as $x_{j+3}$ and so on. This gives
\begin{multline*}
\sum_{(x_2;x_{2j+2})} \frac{\zeta_{x_0}^{\lambda}(x_1)\cdots \zeta_{x_j}^{\lambda}(x_{j+1}) \cdot \overline{\zeta^{\lambda}_{x_{2j+2}}(x_{2j+1})\cdots \zeta^{\lambda}_{x_{j+2}}(x_{j+1})}}{|\Im \zeta_{x_0}^{\lambda}(x_1)\Im \zeta_{x_{2j+2}}^{\lambda}(x_{2j+1})|^{1/2}} \\\cdot \left(\frac{|\Im \zeta_{x_j}^{\lambda}(x_{j+1})|}{|\zeta^{\lambda}_{x_j}(x_{j+1})|^2} - |\Im \zeta_{x_{j+1}}^{\lambda}(x_{j+2})|\right) f(x_{2j+2},x_{2j+1}) \,.
\end{multline*}

Now note that $[\frac{1}{|\Im \zeta|^{1/2}} (\zeta \cB)^j \frac{|\Im \zeta|}{\overline{\zeta}} g](x_0,x_1) = \sum_{(x_2;x_{j+1})} \frac{\zeta_{x_0}(x_1)\cdots \zeta_{x_j}(x_{j+1})}{|\Im \zeta_{x_0}(x_1)|^{1/2}} \frac{|\Im \zeta_{x_j}(x_{j+1})|}{|\zeta_{x_j}(x_{j+1})|^2}\cdot g(x_j,x_{j+1})$. So the above is, in operator form,
\begin{multline*}
\left(\frac{1}{|\Im \zeta^{\lambda}|^{1/2}} \left(\zeta^{\lambda}\cB\right)^j \frac{|\Im \zeta^{\lambda}|}{\overline{\zeta^{\lambda}}} \left(\cB \iota \overline{\zeta^{\lambda}}\right)^{j+1} \frac{1}{|\Im \iota \zeta^{\lambda}|^{1/2}} \iota f\right)(x_0,x_1) \\
- \left(\frac{1}{|\Im \zeta^{\lambda}|^{1/2}} \left(\zeta^{\lambda}\cB\right)^j \zeta^{\lambda} \left(\cB |\Im \zeta^{\lambda}| \iota \overline{\zeta^{\lambda}}\right)\left(\cB \iota \overline{\zeta^{\lambda}}\right)^j \frac{1}{|\Im \iota \zeta^{\lambda}|^{1/2}} \iota f\right)(x_0,x_1) \,,
\end{multline*}
where $\iota$ is edge-reversal. In other words, if
\begin{equation}\label{e:bzeta}
\cB_{\zeta^{\lambda}} = \frac{|\Im \zeta^{\lambda}|}{\overline{\zeta^{\lambda}}} \cB \iota \overline{\zeta^{\lambda}} - \zeta^{\lambda} \left(\cB |\Im \zeta^{\lambda}| \iota \overline{\zeta^{\lambda}}\right) \,,
\end{equation}
then
\begin{multline*}
\frac{1}{|\Im\zeta^{\lambda}|^{1/2}}\left(\zeta^{\lambda}\cB\right)^r|\Im\zeta^{\lambda}|\left(\cB^{\ast}\overline{\zeta^{\lambda}}\right)^r\frac{1}{|\Im\zeta^{\lambda}|^{1/2}} \\
= I +\sum_{j=0}^{r-1} \frac{1}{|\Im \zeta^{\lambda}|^{1/2}} \left(\zeta^{\lambda}\cB\right)^j \cB_{\zeta^{\lambda}} \left(\cB \iota \overline{\zeta^{\lambda}} \right)^j \frac{1}{|\Im \iota \zeta^{\lambda}|^{1/2}} \iota \, .
\end{multline*}
It follows that for $r\ge r'$,
\begin{multline}\label{e:cool2}
\frac{1}{|\Im\zeta^{\lambda}|^{1/2}}\left(\zeta^{\lambda}\cB\right)^r|\Im\zeta^{\lambda}|\left(\cB^{\ast}\overline{\zeta^{\lambda}}\right)^{r'}\frac{1}{|\Im\zeta^{\lambda}|^{1/2}} \\
= \frac{1}{|\Im \zeta^{\lambda}|^{1/2}} \left(\zeta^{\lambda} \cB\right)^{r-r'} |\Im \zeta^{\lambda}|^{1/2} \\
+\sum_{j=0}^{r'-1} \frac{1}{|\Im \zeta^{\lambda}|^{1/2}} \left(\zeta^{\lambda}\cB\right)^{j+r-r'} \cB_{\zeta^{\lambda}} \left(\cB \iota \overline{\zeta^{\lambda}} \right)^j \frac{1}{|\Im \iota \zeta^{\lambda}|^{1/2}} \iota
\end{multline}
and for $r<r'$,
\begin{multline}\label{e:cool3}
\frac{1}{|\Im\zeta^{\lambda}|^{1/2}}\left(\zeta^{\lambda}\cB\right)^r|\Im\zeta^{\lambda}|\left(\cB^{\ast}\overline{\zeta^{\lambda}}\right)^{r'}\frac{1}{|\Im\zeta^{\lambda}|^{1/2}} \\
= |\Im \zeta^{\lambda}|^{1/2} \left(\cB^{\ast}\overline{\zeta^{\lambda}}\right)^{r'-r} \frac{1}{|\Im \zeta^{\lambda}|^{1/2}} \\
+\sum_{j=0}^{r-1} \frac{1}{|\Im \zeta^{\lambda}|^{1/2}} \left(\zeta^{\lambda}\cB\right)^j \cB_{\zeta^{\lambda}} \left(\cB \iota \overline{\zeta^{\lambda}} \right)^{j+r'-r} \frac{1}{|\Im \iota \zeta^{\lambda}|^{1/2}} \iota \, .
\end{multline}

In the last sum, we used that $\cB^{\ast} = \iota \cB \iota$ and $\iota^2=I$.

\subsection{Some Green function estimates}\label{e:greenp}

Let $(G,W)$ be a finite graph of minimal degree $\ge 3$ and consider a path $(x_0;x_{r+1})$ in $G$. Assume $\lambda$ is in the bulk spectrum of $(G,W)$. We know from \eqref{e:magicim} that for any $j\ge 1$,
\[
\sum_{x_{j+1}\in \cN_{x_j}\setminus \{x_{j-1}\}} \frac{|\zeta_{x_{j-1}}^{\lambda}(x_j)|^2\,|\Im\zeta_{x_j}^{\lambda}(x_{j+1})|}{|\Im \zeta_{x_{j-1}}^{\lambda}(x_j)|} = 1 \,.
\]
Moreover, each term in the sum is strictly positive. If $d(x_j) \ge 3$, this implies that each term (there are at least two) is strictly smaller than $1$. It follows that for any $s>1$,
\[
\sum_{x_{j+1}\in\cN_{x_j}\setminus \{x_{j-1}\}} \frac{|\zeta_{x_{j-1}}^{\lambda}(x_j)|^{2s}|\Im\zeta_{x_j}^{\lambda}(x_{j+1})|^s}{|\Im \zeta_{x_{j-1}}^{\lambda}(x_j)|^s} <1 \,.
\]
Taking the maximum of the left-hand side over the finite set of oriented edges of $G$, we obtain $Z_{s,\lambda}(G)<1$, by definition \eqref{e:zslam}. Hence,
\begin{multline*}
\sum_{(x_2;x_{r+1})} \frac{|\zeta_{x_0}^{\lambda}(x_1)\cdots\zeta_{x_{r-1}}^{\lambda}(x_r)|^{2s}|\Im\zeta_{x_r}^{\lambda}(x_{r+1})|^s}{|\Im\zeta_{x_0}^{\lambda}(x_1)|^s} \\
\le Z_{s,\lambda} \sum_{(x_2;x_r)} \frac{|\zeta_{x_0}^{\lambda}(x_1)\cdots\zeta_{x_{r-2}}^{\lambda}(x_{r-1})|^{2s}|\Im\zeta_{x_{r-1}}^{\lambda}(x_r)|^s}{|\Im\zeta_{x_0}^{\lambda}(x_1)|^s} \le Z_{s,\lambda}^r \,.
\end{multline*}

On the other hand, $\frac{|\Im\zeta_{x_r}^{\lambda}(x_{r+1})|^s}{|\Im\zeta_{x_0}^{\lambda}(x_1)|^s} \ge z_{\lambda}^{2s}$ by \eqref{e:zboun1}. We thus get for any $s>1$ and $(x_0,x_1)$,
\begin{equation}\label{e:greensno}
\sum_{(x_2;x_{r+1})} |\zeta_{x_0}^{\lambda}(x_1)\cdots\zeta_{x_{r-1}}^{\lambda}(x_r)|^{2s} \le z_{\lambda}^{-2s} Z_{s,\lambda}^r \,.
\end{equation}
Note that in the case of $(q+1)$-regular graphs with $W\equiv 0$, we have more precisely $\sum_{(x_2;x_{r+1})} |\zeta_{x_0}^{\lambda}(x_1)\cdots\zeta_{x_{r-1}}^{\lambda}(x_r)|^{2s}  = q^r q^{-rs} = q^{-(s-1)r}$.

It follows in particular that
\begin{equation}\label{e:inftyboun}
|\zeta_{x_0}^{\lambda}(x_1)\cdots\zeta_{x_{r-1}}^{\lambda}(x_r)|^2 \le z_{\lambda}^{-2} (Z_{s,\lambda}^{1/s})^r
\end{equation}
decays exponentially in $r$.

We also have the following variant of \eqref{e:greensno}~:
\begin{equation}\label{e:greensno2}
\sum_{(x_2;x_{r+1})} |\zeta_{x_1}^{\lambda}(x_0)\cdots\zeta_{x_r}^{\lambda}(x_{r-1})|^{2s} \le z_{\lambda}^{-6s} Z_{s,\lambda}^r \,.
\end{equation}
In fact, $|\zeta_{x_1}^{\lambda}(x_0)\cdots\zeta_{x_r}^{\lambda}(x_{r-1})|^{2s} = |\frac{G^{\lambda}(\tilde{x}_0,\tilde{x}_0)}{G^{\lambda}(\tilde{x}_r,\tilde{x}_r)}|^{2s}|\zeta_{x_0}^{\lambda}(x_1)\cdots\zeta_{x_{r-1}}^{\lambda}(x_r)|^{2s}$ by \eqref{e:zetainv} and we know $|\frac{G^{\lambda}(v,v)}{G^{\lambda}(w,w)}| \le z_{\lambda}^{-2}$ by \eqref{e:zboun2}.

\subsection{Proof for p-norms}
Now recall Young's inequality~: if $K$ is an operator on $\ell^2(B)$, $\frac{1}{q}=\frac{1}{s}+\frac{1}{r}-1$, $\|K(\cdot,b)\|_r \le A$ and $\|K(b,\cdot)\|_r \le A$ for all $b\in B$, then $\|Kf\|_q \le A \|f\|_s$. We use this with $q=p$ and $s=p'$, so that $r=\frac{p}{2}$. Our operator is $K = \cM_{n,\lambda}\cM_{n,\lambda}^{\ast}$.

Recall that
\[
\cM_{n,\lambda} = \frac{1}{n}\sum_{r=1}^n \cB_{r,\lambda}\,, \qquad \text{where} \quad  \cB_{m,\lambda} = \frac{1}{|\Im \zeta^{\lambda}|^{1/2}}(\zeta^{\lambda}\cB)^m|\Im \zeta^{\lambda}|^{1/2} \,,
\]
so that $\cM_{n,\lambda}\cM_{n,\lambda}^{\ast} = \frac{1}{n^2}\sum_{r,r'=1}^n \cB_{r,\lambda}\cB_{r',\lambda}^{\ast}$. We focus on $\sum_{r=1}^n\sum_{r'=1}^r$, the rest $\sum_{r=1}^n\sum_{r'=r+1}^n$ is bounded similarly.

Using \eqref{e:cool2}, if
\[
\cB_{m,j,\lambda} = \frac{1}{|\Im\zeta^{\lambda}|^{1/2}} (\zeta^{\lambda}\cB)^m\cB_{\zeta^{\lambda}}(\cB\iota\overline{ \zeta^{\lambda}})^j\frac{1}{|\Im\iota \zeta^{\lambda}|^{1/2}}\,,
\]
then for $r\ge r'$,
\[
 \cB_{r,\lambda}\cB_{r',\lambda}^{\ast} = \cB_{r-r',\lambda} + \sum_{j=0}^{r'-1}\cB_{r-r',j,\lambda} \,,
\]
so for any fixed $b=(x_0,x_1)$,
\begin{multline*}
\left\|\frac{1}{n^2}\sum_{r=1}^n\sum_{r'=1}^r \cB_{r,\lambda}\cB_{r',\lambda}^{\ast}(b,\cdot)\right\|_{p/2} \\
\le \frac{1}{n^2}\sum_{r=1}^n\sum_{r'=1}^r \|\cB_{r-r',\lambda}(b,\cdot)\|_{p/2} + \frac{1}{n^2}\sum_{r=1}^n\sum_{r'=1}^r\sum_{j=0}^{r'-1}\|\cB_{r-r',j,\lambda}(b,\cdot)\|_{p/2} \,.
\end{multline*}

To calculate these norms, we need to study the kernels of the operators. If $m\le \rho_G$, we have $\cB_{m,\lambda}(b,b') = \frac{1}{|\Im\zeta_{x_0}^{\lambda}(x_1)|^{1/2}}\zeta_{x_0}^{\lambda}(x_1)\cdots\zeta_{x_{m-1}}^{\lambda}(x_m)|\Im\zeta_{x_m}^{\lambda}(x_{m+1})|^{1/2}$ if $b'\in \cB^m b$, otherwise $\cB_{m,\lambda}(b,b')=0$. Here $(x_2;x_{m+1})$ is the path from $(x_0,x_1)$ with $b'=(x_m,x_{m+1})$. On the other hand, several paths outgoing from $b$ can in principle reach $b'\in \cB^m b$, if $m>\rho_G$. As long as $m \le \ell_G$ however, there is at most one additional path of the same length $(x_2';x_{m+1}')$ that can reach $b'$, see Remark~\ref{rem:tangle}. Hence, if $p>4$ and $m \le \ell_G$,
\begin{multline*}
\|\cB_{m,\lambda}(b,\cdot)\|_{p/2} = \left(\sum_{b'\in \cB^m b}|\cB_{m,\lambda}(b,b')|^{p/2}\right)^{2/p} \\
\le \left(2\sum_{(x_2;x_{m+1})}\frac{|\zeta_{x_0}^{\lambda}(x_1)\cdots\zeta_{x_{m-1}}^{\lambda}(x_m)|^{p/2}|\Im\zeta_{x_m}^{\lambda}(x_{m+1})|^{p/4}}{|\Im\zeta_{x_0}^{\lambda}(x_1)|^{p/4}}\right)^{2/p} \le 2^{2/p}(Z_{p/4,\lambda}^{2/p})^m \,.
\end{multline*}
Similarly, we have $\cB_{m,j,\lambda}(b,b') = \frac{\zeta_{x_0}^{\lambda}(x_1)\cdots\zeta_{x_{j+m}}^{\lambda}(x_{j+m+1})\cdot\overline{\zeta_{x_{2j+m+2}}^{\lambda}(x_{2j+m+1})\cdots\zeta_{x_{j+m+2}}^{\lambda}(x_{j+m+1})}}{|\Im\zeta_{x_0}^{\lambda}(x_1)\Im\zeta_{x_{2j+m+1}}^{\lambda}(x_{2j+m})|^{1/2}}\cdot (\frac{|\Im \zeta_{x_{j+m}}^{\lambda}(x_{j+m+1})|}{|\zeta_{x_{j+m}}^{\lambda}(x_{j+m+1})|^2} - |\Im\zeta_{x_{j+m+1}}^{\lambda}(x_{j+m+2})|)$ if $b'\in \cB^{2j+m+1}$ and zero otherwise, if $m+2j+1\le \rho_G$. We bound the term in parentheses as well as the denominator by $2z_{\lambda}^{-2}$ using \eqref{e:zboun2}. This gives
\begin{align*}
\|\cB_{m,j,\lambda}(b,\cdot)\|_{p/2} &\le 2z_{\lambda}^{-2}  \bigg(2\sum_{(x_2;x_{2j+m+2})} |\zeta_{x_0}^{\lambda}(x_1)\cdots\zeta_{x_{j+m}}^{\lambda}(x_{j+m+1}) \\
&\qquad\qquad\qquad\qquad\qquad \cdot \zeta_{x_{j+m+2}}^{\lambda}(x_{j+m+1})\cdots\zeta_{x_{2j+m+1}}^{\lambda}(x_{2j+m})|^{p/2} \bigg)^{2/p} \\
&\le 2^{2/p+1}z_{\lambda}^{-2}z_{\lambda}^{-4} (Z_{p/4,\lambda}^{2/p})^{j}(Z_{p/4,\lambda}^{2/p})^{j+m+1} \le 4 z_{\lambda}^{-6}(Z_{p/4,\lambda}^{2/p})^{2j+m+1} \,,
\end{align*}
where we first used \eqref{e:greensno2} then \eqref{e:greensno}. Denoting $\alpha_{p,\lambda} = Z_{p/4,\lambda}^{2/p}<1$, we get
\[
\left\|\frac{1}{n^2}\sum_{r=1}^n\sum_{r'=1}^r \cB_{r,\lambda}\cB_{r',\lambda}^{\ast}(b,\cdot)\right\|_{p/2} \le \frac{2^{2/p}}{n^2} \sum_{r=1}^n \sum_{r'=1}^r \alpha_{p,\lambda}^{r-r'} + \frac{4z_{\lambda}^{-6}}{n^2} \sum_{r=1}^n\sum_{r'=1}^r\sum_{j=0}^{r'-1}\alpha_{p,\lambda}^{2j+r-r'+1} \le \frac{C_{p,\lambda}}{n} \,,
\]
since $\sum_{r'=1}^r\alpha^{-r'} = \alpha^{-1} \frac{\alpha^{-r}-1}{\alpha^{-1}-1} \le \frac{\alpha^{-r}}{1-\alpha}$ and $\sum_{j=0}^{r'-1}\alpha^{2j} \le \frac{1}{1-\alpha^2}$. To take an explicit $C_{p,\lambda}$, the above is bounded more precisely by $\frac{1}{n} \frac{1}{1-\alpha}(2^{2/p}+\frac{4z_{\lambda}^{-6}}{1-\alpha^2})$. Now $2^{2/p}\le 2\le \frac{2z_{\lambda}^{-6}}{1-\alpha^2}$ since $z_{\lambda}\le 1$. Also, $\frac{1}{1-\alpha^2}\le \frac{1}{1-\alpha}$. So we may take $C_{p,\lambda} = \frac{6z_{\lambda}^{-6}}{(1-\alpha)^2}$.

Similarly, $\| \frac{1}{n^2}\sum_{r=1}^n\sum_{r'=r+1}^n \cB_{r,\lambda}\cB_{r',\lambda}^{\ast}(b,\cdot)\|_{p/2} \le \frac{C_{p,\lambda}}{n}$. Finally, as $\cM_{n,\lambda}\cM_{n,\lambda}^{\ast}$ is self-adjoint, we deduce from Young's inequality that its $p' \to p$ norm is bounded by $\frac{2C_{p,\lambda}}{n}$.

Recalling \eqref{e:pnormpsif}--\eqref{e:ttstarn}, since $\|f_{\lambda}\|_2 \le \sqrt{D}(1+z_{\lambda}^{-1})\|\psi_{\lambda}\|_2$, then arguing similarly for the term containing $g_{\lambda}$, it follows that $\|\psi_{\lambda}\|_p \le \frac{2z_{\lambda}^{-1}\sqrt{2DC_{p,\lambda}}(1+z_{\lambda}^{-1})}{\sqrt{n}}\|\psi_{\lambda}\|_2$. As this holds for any $n\le \ell_G$, the proof is complete (using $\sqrt{12D}=2\sqrt{3D}\le 2D$).

\begin{rem}
The previous proof continues to hold for $n>\ell_G$, as long as for any $\beta>0$, there are at most $2^{\beta k}$ paths of length $k$ between two edges, for $k\le n$. In this case, in the previous argument we obtain $(2^{\beta} Z_{p/4,\lambda}^{2/p})^k$ instead of $(Z_{p/4,\lambda}^{2/p})^k$, and we choose $\beta$ small enough so that $2^{\beta} Z_{p/4,\lambda}^{2/p}<1$.
\end{rem}

\subsection{Support of eigenfunctions}

We first estimate the support of non-backtracking eigenfunctions $f_{\lambda}$ from \eqref{e:fglambda}. Let $S$ be the support of $f_{\lambda}$.

Recall that we have $(\zeta^{\lambda}\cB)^r f_{\lambda} = f_{\lambda}$. Now
\[
\left\|f_{\lambda}\right\|_2^2 = \langle f_{\lambda} \chi_S, f_{\lambda} \rangle = \langle f_{\lambda}\chi_S, (\zeta^{\lambda}\cB)^r f_{\lambda} \rangle \le \|f_{\lambda}\chi_S\|_1\cdot  \| (\zeta^{\lambda}\cB)^r f_{\lambda} \|_{\infty} \,,
\]
and assuming $r\le \ell_G$,
\begin{align*}
\left|[(\zeta^{\lambda}\cB)^rf_{\lambda}](x_0,x_1)\right| & = \left|\sum_{(x_2;x_{r+1})} \zeta_{x_0}^{\lambda}(x_1)\cdots\zeta_{x_{r-1}}^{\lambda}(x_r)f_{\lambda}(x_r,x_{r+1})\right| \\
& \le \sup_{(x_0;x_{r+1})} |\zeta_{x_0}^{\lambda}(x_1)\cdots\zeta_{x_{r-1}}^{\lambda}(x_r)| \sum_{(x_2;x_{r+1})} |f_{\lambda}(x_r,x_{r+1})| \\
& \le 2z_{\lambda}^{-1} (Z_{s,\lambda}^{1/2s})^r \|f_{\lambda}\|_1 = 2z_{\lambda}^{-1} (Z_{s,\lambda}^{1/2s})^r \|f_{\lambda}\chi_S\|_1 \,,
\end{align*}
where we used \eqref{e:inftyboun} and the fact that there are at most two paths of length $r$ between two edges if $r\le \ell_G$. Thus,
\[
\| f_{\lambda}\|_2^2 \le 2z_{\lambda}^{-1}(Z_{s,\lambda}^{1/2s})^r \|f_{\lambda} \chi_S\|_1^2 \le 2z_{\lambda}^{-1}(Z_{s,\lambda}^{1/2s})^r \cdot |S| \cdot \|f_{\lambda}\|_2^2 \,,
\]
using the Cauchy-Schwarz inequality.

It follows that\footnote{in case of $(q+1)$-regular graphs, $W\equiv 0$, this argument yields more precisely $|S| \ge \frac{1}{2} q^{r/2}$.}
\[
|S| \ge \frac{1}{2}z_{\lambda}(Z_{s,\lambda}^{-1/2s})^r \,,
\]
with $Z_{s,\lambda}^{-1/2s} >1$.

To go back to the support of $\psi_{\lambda}$, recall that $f_{\lambda}(x_0,x_1) = \psi_{\lambda}(x_1) - \zeta_{x_0}^{\lambda}(x_1)\psi_{\lambda}(x_0)$. If $f_{\lambda}(x_0,x_1)\neq 0$, then either $\psi_{\lambda}(x_0) \neq 0$ or $\psi_{\lambda}(x_1)\neq 0$. Hence, if $\Lambda=\supp \psi_{\lambda}$ and
\[
\widetilde{S} = \{(x,y),(y,x):x\in \Lambda,y\sim x\} \,,
\]
we have $S\subseteq \widetilde{S}$. In particular, $|S| \le |\widetilde{S}| \le 2D \cdot |\Lambda|$, so
\[
|\Lambda| \ge \frac{z_{\lambda}}{4D} M_{\lambda}^r
\]
with $M_{\lambda}=Z_{2,\lambda}^{-1/4}>1$.

To obtain the sharper lower bound (without $z_{\lambda}$), just replace the function $f_{\lambda}$ in the previous argument by $\frac{f_{\lambda}}{|\Im \zeta^{\lambda}|^{1/2}}$, and the operator $(\zeta^{\lambda}\cB)^r$ by $\cB_{r,\lambda} = \frac{1}{|\Im \zeta^{\lambda}|^{1/2}}(\zeta^{\lambda}\cB)^r|\Im\zeta^{\lambda}|^{1/2}$. In this case, we get to estimate $\sup_{(x_0;x_{r+1})} \frac{|\zeta_{x_0}^{\lambda}(x_1)\cdots\zeta_{x_{r-1}}^{\lambda}(x_r)|\cdot|\Im\zeta_{x_r}^{\lambda}(x_{r+1})|^{1/2}}{|\Im \zeta_{x_0}^{\lambda}(x_1)|^{1/2}}$, and from the argument preceding \eqref{e:inftyboun}, we see this is bounded by $Z_{s,\lambda}^{r/2s}$ (instead of $z_{\lambda}^{-1}Z_{s,\lambda}^{r/2s}$ above). 

\bigskip

{\bf{Acknowledgements~:}} E.L.M. was supported by the Marie Sk{\l}odowska-Curie Individual Fellowship grant 703162. M.S. was supported by a public grant as part of the Investissement d'avenir project, reference ANR-11-LABX-0056-LMH, LabEx LMH.

\providecommand{\bysame}{\leavevmode\hbox to3em{\hrulefill}\thinspace}
\providecommand{\MR}{\relax\ifhmode\unskip\space\fi MR }
\providecommand{\MRhref}[2]{%
  \href{http://www.ams.org/mathscinet-getitem?mr=#1}{#2}
}
\providecommand{\href}[2]{#2}

\end{document}